\documentclass[10pt, reqno]{amsart}
\usepackage{amsmath, amsthm, amssymb}
\usepackage{times}
\usepackage{color}
\usepackage{hyperref}
\usepackage{comment}
\usepackage{enumerate}
\usepackage{setspace}
\usepackage{pgfplots}
\usepackage{graphicx}
\usepackage{xcolor}
\usepackage{booktabs}
\usepackage{float}
\numberwithin{equation}{section}
\newcommand{\R}{\mathbb{R}}
\newcommand{\cD}{\mathcal{D}}
\newcommand{\pa}{\partial}
\newcommand{\ve}{\epsilon}
\newcommand{\od}{\mathring{d}}
\newcommand{\T}{\mathbb{T}}
\newcommand{\rd}{\,{\rm d}}

\newcommand{\ck}{c_k^\epsilon}
\newcommand{\dx}{\rd x}
\newcommand{\ds}{\rd s}
\newcommand{\dt}{\rd t}
\newcommand{\mk}{\mu_k^\epsilon}
\newcommand{\dep}{d^\epsilon}
\renewcommand{\hat}{\widehat}
\renewcommand{\div}{\rm div}
\newtheorem{proposition}{Proposition}[section] % Numbered by section
\newtheorem{theorem}{Theorem}[section]
\newtheorem{lemma}{Lemma}[section] % Numbered by lemma
\newtheorem{definition}{Definition}[section]

\newtheorem{remark}[theorem]{Remark}

\begin{document}
  \author{Hengrong Du}
  \address{Department of Mathematics and Computer Science\\
    Fisk University \\
    Nashville, Tennessee, USA}
  \email{hdu@fisk.edu}
  \thanks{}

  \author{Fizay-Noah Lee}
  \address{Department of Mathematics\\
    Vanderbilt University\\
    Nashville, Tennessee\\
    USA}
  \email{fizzynoah@gmail.com}
  \thanks{}

  \author{Gieri Simonett}
  \address{Department of Mathematics\\
    Vanderbilt University\\
    Nashville, Tennessee\\
    USA}
  \email{gieri.simonett@vanderbilt.edu}
  \thanks{}
  \date{\today}
  \keywords{nematic electrolytes, Poisson--Nernst--Planck, Ericksen--Leslie,  concentration-cancellation}
  \title{Global existence of weak solutions \\ to a nematic electrolyte model in
  2D}

  \noindent
  \thanks{\em{MSC Classification: 35Q35, 76A05}}

  \begin{abstract}
    In this paper, we study a system of partial differential equations modeling the dynamics of nematic electrolytes on a two-dimensional torus $\mathbb{T}^2$. The model couples the Poisson--Nernst--Planck equations for the evolution of ion concentrations and electrostatic potential with the Ericksen-Leslie equations for the flow and orientation of nematic liquid crystals. We prove the global existence of weak solutions to this system. The proof relies on a Ginzburg-Landau approximation scheme. Key steps include deriving uniform energy estimates and establishing the strong convergence of the director field by utilizing a Pohozaev-type identity to handle the lack of compactness in the Ericksen stress tensor.
  \end{abstract}

  \allowdisplaybreaks

  \maketitle
  \section{Introduction}

Nematic liquid crystals consist of rod-like molecules that exhibit long-range 
orientational order, where the molecular orientations are correlated over large 
distances, leading to an ordered arrangement in a fluid-like medium. This order 
is distinct from positional order, as the molecules can still move freely, but 
their orientations exhibit significant alignment along a preferential direction, 
typically defined by what is known as the director \cite{de1993physics}. 
The unique structure of nematic fluids leads to anisotropic properties, meaning 
that the physical properties such as elasticity, viscosity, and dielectric 
constants are directionally dependent due to the alignment of the molecular axes. 
For instance, the Ericksen-Leslie model \cite{ericksen1961conservation,leslie1968some} 
serves as the foundational theory, consisting of fluid dynamics described by 
the Navier-Stokes equations \cite{temam2024navier} alongside an additional stress 
tensor that encapsulates the effects of molecular orientation. The coupling between 
fluid flow and the director field is significant as it influences both 
electrokinetic and hydrodynamic properties \cite{sonnet2012dissipative}. 
Nematic electrolytes are a class of materials that combine the properties of 
nematic liquid crystals with the ionic conduction behaviors characteristic of 
electrolytes. A key feature of nematic electrolytes is their response to an 
applied electric field, which results in nonlinear electro-osmosis (NEO) \cite{zaltz}. 
The NEO effect is characterized by the nonlinear dependence of induced fluid 
velocity on the electric field strength. This phenomenon occurs because the 
periodic reorientation of the director field due to the applied field leads to 
an effective charge separation and fluid movement. The interactions leading to 
this effect are vital for applications in microfluids and electrokinetic devices, 
where controlled fluid flow is required without reversing flow direction \cite{Calderer2016}. 
The behavior of ionic species in nematic electrolytes is governed by their 
interactions with the director field and the resulting electrohydrodynamic flows, 
which can generate complex vortex structures. Both point vortices and vortex 
filaments may emerge, reflecting flow phenomena that are not observed in 
isotropic fluids. Numerical illustrations of these effects can be found in \cite{bavnas2021numerical}.

The mathematical analysis of nematic electrolytes lies in the coupling between 
the Poisson--Nernst--Planck (PNP) framework, which governs the evolution of 
charge carriers and the electric potential \cite{choi}, and the Ericksen--Leslie equations, 
which describe the macroscopic flow and molecular orientation of the nematic liquid crystal. For the sake of simplicity, we neglect boundary conditions and consider the problem in a periodic domain $\T^{3}$.
  The system state is described by the concentrations of $N$ charged ion species 
  $c_{k}:\T^{3}\times[0,T]\to \R_{+}$ ($k=1, \dots, N$) with valences $z_{k}\in \R \setminus \{0\}$,
  electrostatic potential $\Phi:\T^{3}\times[0,T]\to \R$, fluid velocity
  $v=(v^1, v^2, v^3):\T^{3}\times[0,T]\to \R^{3}$ and pressure $p:\T^{3}\times[0,T]\to \R$, 
  and the director $d=(d^1, d^2, d^3):\T^{3}\times[0,T]\to \R^{3}$, representing the mean 
  molecular orientation. These quantities evolve according to:

  \begin{equation}
    \left\{
    \begin{aligned}
      \pa_{t}c_{k}+v\cdot \nabla c_{k}    & =\nabla\cdot(c_{k}\cD_{k}\nabla \mu_{k}),\\
      -\nabla\cdot (\varepsilon(d)\nabla \Phi)    & =\sum_{k=1}^{N}z_{k}c_{k}=:\rho,\\
      \pa_{t}v+(v\cdot \nabla v)+\nabla p & =- \nabla\cdot (\nabla d\odot \nabla d)+\nabla\cdot\sigma + \nabla\cdot \left( (\nabla \Phi\otimes \nabla \Phi)\varepsilon(d) \right), \\
      \nabla\cdot v                       & =0, \\
      \gamma_{1}\od+\gamma_{2}D(v)d     & = \Delta d+|\nabla d|^{2}d+\gamma_{2}(d^{\top}D(v) d) d \\
      & \qquad +\varepsilon_{a}(\nabla \Phi\otimes \nabla\Phi)d-\varepsilon_{a}(d^{\top}(\nabla \Phi\otimes \nabla \Phi)d)d,\\
      |d|&=1.
    \end{aligned}
    \right. \label{eqn:MainPDE}
  \end{equation}
  Here, the \emph{anisotropic diffusion matrices} $\cD_{k}$ reflect the mobility of the $k$-th species. In the isotropic case, where $\cD_{k}$ is proportional to the identity matrix, this corresponds to the classical Einstein relation for electron mobility in a gas. The \emph{electrochemical potentials} are defined by $\mu_{k}:=\ln c_{k}+z_{k}\Phi$. The \emph{dielectric permittivity matrix} $\varepsilon(d)$ is given by
  \begin{equation*}
    \varepsilon(d)=\varepsilon_{\|}d\otimes d+\varepsilon_{\perp}({\rm I}_{3\times 3}-d\otimes d)=\varepsilon_{\perp}{\rm I}_{3\times 3}
    +(\varepsilon_{\|}-\varepsilon_{\perp})d\otimes d,
  \end{equation*}
  where $\varepsilon_{\|}$ and $\varepsilon_{\perp}$ represent the parallel and perpendicular dielectric permittivity regarding $d$ ($\varepsilon_a := \varepsilon_{\|}-\varepsilon_{\perp}$). The Leslie stress tensor $\sigma$ takes the form
  \begin{equation*}
    \sigma=\alpha_{1}(D(v)d\cdot d)d\otimes d+\alpha_{2}\od\otimes d+\alpha_{3}
    d\otimes \od+\alpha_{4}D(v)+\alpha_{5}D(v)d\otimes d+\alpha_{6}d\otimes D
    (v)d, 
  \end{equation*}
  where $\od=\pa_{t}d+v\cdot \nabla d-\Omega(v)d$ is the co-rotational derivative of $d$, and $\alpha_{1}, \dots, \alpha_{6}$ are material constants. The symmetric and anti-symmetric parts of the velocity gradient are denoted by $D(v):=\frac{1}{2}(\nabla v+(\nabla v)^{\top})$ and $\Omega(v):=\frac{1}{2}(\nabla v-(\nabla v)^{\top})$, respectively. 
  System \eqref{eqn:MainPDE} constitutes a highly nonlinear system of partial differential equations. The Poisson--Nernst--Planck equations \eqref{eqn:MainPDE}$_{1-2}$ describe the evolution of charged carriers and the electric potential, featuring an anisotropic dielectric permittivity $\varepsilon(d)$ that depends on the director $d$. The forced Navier-Stokes equations \eqref{eqn:MainPDE}$_{3-4}$ govern the fluid velocity and pressure, driven by elastic stresses and external forces induced by the director field and the electric potential gradient $\nabla \Phi$. Finally, equation \eqref{eqn:MainPDE}$_{5}$ describes the evolution of the director, incorporating the influence of electric forces, subject to the unit-length constraint $|d|=1$ given by \eqref{eqn:MainPDE}$_{6}$.

In a similar fashion to \cite{de2023uniqueness, du2022weak,huang2014regularity, li2016uniqueness}, in this manuscript, we consider \eqref{eqn:MainPDE} in the two-dimensional setting assuming that the domain is $\T^2$.  More precisely, $v=(v^1, v^2)$ is a planar vector field (i.e., $v^3=0$, $\partial_{x_3}v=0$), and then both $D(v)$ and $\Omega(v)$ are $2\times 2$ tensor fields. Furthermore, we assume that $\partial_{x_3}c_k=\partial_{x_3}\Phi=0$ for all $k=1, \dots, N$, and $\partial_{x_3}d=0$.  We denote $\hat{d}=(d^1, d^2)$ for any given $d=(d^1, d^2, d^3)$, then the reduced Leslie stress tensor is given by 
\begin{align*}
  \hat{\sigma}&=\alpha_1(D(v)\hat{d}\cdot\hat{d})\hat{d}\otimes\hat{d}+\alpha_2\hat{\od}\otimes\hat{d}+\alpha_3\hat{d}\otimes\hat{\od}+\alpha_4D(v)+\alpha_5(D(v)\hat{d})\otimes\hat{d}+\alpha_6\hat{d}\otimes(D(v)\hat{d}).
\end{align*}
Moreover, $\hat{\varepsilon}(d)= \varepsilon_{\perp} {\rm I}_{2\times 2} + (\varepsilon_{\|}-\varepsilon_{\perp})\hat{d}\otimes\hat{d}$. 
Then the system \eqref{eqn:MainPDE} reads 
  \begin{equation}
    \left\{
    \begin{aligned}
      \pa_{t}c_{k}+v\cdot \nabla c_{k}    & =\nabla\cdot(c_{k}\cD_{k}\nabla \mu_{k}),\\
      -\nabla\cdot (\hat\varepsilon(d)\nabla \Phi)    & =\sum_{k=1}^{N}z_{k}c_{k}=:\rho, \\
      \pa_{t}v+(v\cdot \nabla v)+\nabla p & =- \nabla\cdot (\nabla d\odot \nabla d)+\nabla\cdot\hat\sigma + \nabla\cdot \left( (\nabla \Phi\otimes \nabla \Phi)\hat\varepsilon(d) \right), \\
      \nabla\cdot v                       & =0, \\
      \gamma_{1}\hat\od+\gamma_{2}D(v)\hat{d}     & = \Delta \hat{d}+|\nabla d|^{2}\hat{d}+\gamma_{2}(\hat{d}^{\top}D(v) \hat{d}) \hat{d} \\
      & \qquad +\varepsilon_{a}(\nabla \Phi\otimes \nabla\Phi)\hat{d}-\varepsilon_{a}(\hat{d}^{\top}(\nabla \Phi\otimes \nabla \Phi)\hat{d})\hat{d},\\
      \gamma_1 (\partial_t d^3+v\cdot \nabla d^3)&=\Delta d^3+|\nabla d|^2 d^3+\gamma_2 (\hat{d}^{\top}D(v)\hat{d}) d^3-\varepsilon_a(\hat{d}^{\top}(\nabla \Phi\otimes \nabla \Phi)\hat{d})d^3, 
      \\
      |d|&=1.
    \end{aligned}
    \right. \label{eqn:MainPDEreduced}
  \end{equation}
Without loss of generality, to simplify the presentation, we use the variables and corresponding tensors from \eqref{eqn:MainPDE} throughout the remainder of the manuscript rather than those in \eqref{eqn:MainPDEreduced}.
  \subsection{Weak solutions}
  For the Ericksen--Leslie system, it has been shown recently that finite-time singularities can develop even from smooth initial data \cite{huang2016finite,lai2022finite}. Since the nematic electrolyte model \eqref{eqn:MainPDE} contains the Ericksen--Leslie system as a subsystem, it is expected to suffer from the same issue. Consequently, classical solutions to our system may not exist globally in time. Therefore, we adopt the following weak formulation, which naturally accommodates the presence of singularities and allows for solutions that exist globally in time.

  Before stating the definition, we introduce the relevant function spaces. We denote by $L^2_{\rm div}(\T^2)$ the space of square-integrable, divergence-free vector fields on $\T^2$, and by $H^1_{\rm div}(\T^2)$ the corresponding Sobolev space, i.e., $H^1_{\rm div}(\T^2) = \{v \in H^1(\T^2;\R^2) : \nabla \cdot v = 0\}$. 
 \goodbreak

  \begin{definition}
    \label{def:weak} Given $0<T<\infty$, the functions
    \begin{align}
       & v\in L^{\infty}(0,T; L^{2}_{\rm div}(\T^{2}))\cap L^{2}(0,T; H^{1}_{\rm div}(\T^{2})), \nonumber \\
       & c_{k}\in L^{\infty}(0, T; L^{\infty}(\T^{2}))\cap L^{2}(0, T; H^{1}(\T^{2})), \nonumber \\
       & \Phi\in L^{\infty}(0,T; H^{1}(\T^{2}))\cap L^{\infty}(0,T; L^{\infty}(\T^{2})), \nonumber \\
       & d\in L^{\infty}(0,T; H^{1}(\T^{2})),\quad |d(x,t)|=1\;{\rm a.e. }(x,t)\in \T^{2}\times(0, T) \label{eqn:hard_constraint}
    \end{align}
    are a \emph{weak solution} of \eqref{eqn:MainPDE} if $c_k\ge 0$, and the functions satisfy the
    following integral identities:
    \begin{align*}
       & \int_{0}^{T}\int_{\T^2}c_{k}\pa_{t}\phi_{k}+c_{k}v\cdot \nabla \phi_{k}\dx\dt=\int_{0}^{T}\int_{\T^2}c_{k}\mathcal{D}_{k}\nabla \mu_{k}\cdot \nabla \phi_{k}\dx\dt,                \\
       & \int_{0}^{T}\int_{\T^2}(\varepsilon(d)\nabla \Phi)\cdot \nabla \psi\dx\dt =-\int_{0}^{T}\int_{\T^2}\sum_{k=1}^{N}z_{k}c_{k}\psi\dx\dt,                                                     \\
       & \int_{0}^{T}\int_{\T^2}v \cdot \pa_{t}z+(v\otimes v):\nabla z \dx\dt\\
       &=\int_{0}^{T}\int_{\T^2}[-\nabla d\odot \nabla d+\sigma +(\nabla\Phi\otimes \nabla \Phi )\varepsilon(d)]:\nabla z\dx\dt, \\
       & \int_{0}^{T}\int_{\T^2}\gamma_{1}[d \cdot \pa_{t}n + (v\otimes d):\nabla n +(\Omega(v)d) \cdot n]-\gamma_{2}(D(v)d )\cdot n \dx\dt\\
       & = \int_{0}^{T}\int_{\T^2}\nabla d:\nabla n-|\nabla d|^{2}d\cdot n-\gamma_{2}(d\cdot D(v) d)(d\cdot n)            \dx\dt                                                    \\
       & \qquad+\int_{0}^{T}\int_{\T^2}-\varepsilon_{a}((\nabla \Phi\otimes \nabla \Phi) d)\cdot n+\varepsilon_{a}(d\cdot (\nabla \Phi\otimes \nabla \Phi) d) (d\cdot n)\dx\dt
    \end{align*}
    for every $\phi_{k}, \psi\in C^{\infty}_{c}((0, T)\times \T^{2})$, $z\in C_c^{\infty}((0, T)\times \T^{2}; \R^{2})$, $n\in C_c^{\infty}((0, T)\times \T^{2}; \R^{3})$ with $\nabla \cdot z=0$ and the initial conditions:
    \begin{equation}
      c_{k}(x, 0)=c_{k, 0}(x)\ge c>0, \; v(x, 0)=v_{0}(x), \; d(x, 0)=d_{0}(x)
    \end{equation}
   for some constant $c>0$.
  \end{definition}

  \subsection{Assumptions}
  Throughout this paper, we adopt the following assumptions:
  \begin{enumerate}[({A}1)]
    \item \label{assump:species} The system involves $N \ge 2$ ionic species, where the $k$-th species has valence $z_k \in \R \setminus \{0\}$.
    \item The diffusion matrices $\cD_{k}$ are bounded and strictly positive definite (i.e., $(\cD_{k}\xi)\cdot \xi\ge \alpha|\xi|^{2}$ for some $\alpha>0$).
    \item The dielectric matrix $\varepsilon(d)$ is strictly positive definite with $\varepsilon_{\perp} >0$ and $\varepsilon_{a}\ge 0$, satisfying $\varepsilon(d)\xi\cdot \xi \ge \varepsilon_{\perp} |\xi|^{2}$.
    \item The material constants $\gamma_{1}$ and $\gamma_{2}$ are related to the Leslie coefficients through the compatibility conditions $\gamma_{1}=\alpha_{3}-\alpha_{2}$ and $\gamma_{2}=\alpha_{6}-\alpha_{5}$.
    \item \label{assump:viscosity} The viscosity coefficients satisfy $\gamma_1>0$, $\alpha_4>0$, $\alpha_1\ge 0$, and 
    $$4(\alpha_5+\alpha_6)\gamma_1-(\gamma_2+\alpha_2+\alpha_3)^2>0.$$
    \item \label{assump:electroneutrality} We observe from \eqref{eqn:MainPDE} that for the Poisson equation to be well-posed on the torus, we need for electroneutrality to hold i.e. $\int_{\mathbb{T}^2}\rho(t)\dx=0$ for all $t$. We note that if we choose initial conditions $c_{k,0}$ so that electroneutrality is satisfied, then this property is propagated in time as can be seen by integrating the Nernst-Planck equations $\eqref{eqn:MainPDE}_1$ in space. Throughout this paper, we shall assume that initial conditions are chosen so that electroneutrality is satisfied.
  \end{enumerate}
  \begin{remark}
    We remark that under the Parodi relation $\gamma_2=\alpha_6-\alpha_5=\alpha_2+\alpha_3$, the condition in (A\ref{assump:viscosity}) becomes $\gamma_1>0$, $\alpha_4>0$, $\alpha_1 \ge 0$, and $(\alpha_5+\alpha_6)\gamma_1-\gamma_2^2 >0 $.
  \end{remark}
The system \eqref{eqn:MainPDE} was introduced in \cite{Calderer2016}, derived from conservation laws and a variational structure; that work also included numerical studies of a simplified one-dimensional version. Due to the strong coupling between the fluid, charge carriers, and director field, the system presents significant mathematical challenges. To the best of our knowledge, the global existence of solutions (weak or strong) remains an open problem. It is worth mentioning that local-in-time well-posedness for the hyperbolic version (with $\ddot{d}$) in $\R^3$ was established in \cite{ma2023incompressible}. Recently, the existence of weak solutions for a relaxed system on $\T^3$ was proven in \cite{feireisl2020weak}. Furthermore, in \cite{gieri2026} we establish the existence and uniqueness of strong solutions with criteria for global existence, and characterization of equilibria to \eqref{eqn:MainPDE} in both 2D and 3D. In the present paper, we establish the global existence of weak solutions for the full system \eqref{eqn:MainPDE} on $\T^2$. Our proof utilizes a concentration-cancellation argument applied to a sequence of approximate solutions, significantly extending the framework of \cite{feireisl2020weak}. We now state our main result:
  \begin{theorem}\label{thm:MainTheorem}
    Let the assumptions (A\ref{assump:species})--(A\ref{assump:electroneutrality}) hold and the initial data satisfy
    \begin{equation*} c_{k, 0}\in
    L^{\infty}(\T^{2}) \text{ with } c_{k,0}\ge c> 0,\; v_{0}\in L^{2}_{\rm div}(\T^{2}),\; d_{0}\in H^{1}( \T^{2}; \mathbb{S}^{2})
    \end{equation*}
    for some constant $c>0$. Then the PDE system \eqref{eqn:MainPDE} admits a
    weak solution in the sense of Definition \ref{def:weak}.
  \end{theorem}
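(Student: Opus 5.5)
We prove Theorem \ref{thm:MainTheorem} by a Ginzburg--Landau approximation. For $\epsilon>0$ we replace the constraint $|d|=1$ by the penalty $\frac{1}{4\epsilon^2}(1-|d|^2)^2$ in the elastic energy. The director equation then becomes
\[
\gamma_1\od+\gamma_2 D(v)d=\Delta d+\tfrac{1}{\epsilon^2}(1-|d|^2)d+\varepsilon_a(\nabla\Phi\otimes\nabla\Phi)d,
\]
with the Lagrange-multiplier terms suitably modified so that the Leslie stress and the director equation stay compatible with the energy law. The Ericksen stress is correspondingly modified, $-\nabla\cdot(\nabla d\odot\nabla d)$ being balanced by the gradient of the penalty potential, which can be absorbed into the pressure. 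For fixed $\epsilon$ we build solutions $(c_k^\epsilon,\Phi^\epsilon,v^\epsilon,d^\epsilon)$ by a Galerkin scheme for $v$, together with parabolic solvability of the $d$- and $c_k$-equations, and pass to the limit in the Galerkin parameter. The key input at this stage is the energy identity
\[
\frac{d}{dt}\Big(\tfrac12\|v\|_2^2+\tfrac12\|\nabla d\|_2^2+\int\tfrac{(1-|d|^2)^2}{4\epsilon^2}+\sum_k\int c_k(\ln c_k-1)+\tfrac12\int\varepsilon(d)\nabla\Phi\cdot\nabla\Phi\Big)+\mathcal{D}_\epsilon=0 .
\]
Here $\mathcal{D}_\epsilon$ collects the dissipation: $\alpha_4\|D(v)\|_2^2$, $\gamma_1^{-1}$ times the squared $L^2$ norm of the molecular field, the remaining Leslie terms (nonnegative by (A\ref{assump:viscosity}) via a completion of squares), and $\sum_k\int c_k\cD_k\nabla\mu_k\cdot\nabla\mu_k$.

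Next we derive estimates uniform in $\epsilon$.
\begin{enumerate}[(i)]
\item The energy identity bounds $v^\epsilon$ in $L^\infty L^2\cap L^2H^1$, $\nabla d^\epsilon$ in $L^\infty L^2$, the penalty term, and $\nabla\Phi^\epsilon$ in $L^\infty L^2$.
\item We obtain $L^\infty$ bounds on $c_k^\epsilon$ (as demanded by Definition \ref{def:weak}) from a maximum-principle or Moser/Stampacchia iteration on the Nernst--Planck equations. We then use these in the Poisson equation, with De Giorgi--Nash for the elliptic operator $\nabla\cdot(\varepsilon(d)\nabla\cdot)$, whose coefficients are bounded measurable and uniformly elliptic once $|d^\epsilon|$ is kept bounded by a maximum principle. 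This gives $\Phi^\epsilon\in L^\infty$, and Meyers' estimate gives $\nabla\Phi^\epsilon\in L^\infty L^{p}$ for some $p>2$.
\item The dissipation bounds $\nabla\sqrt{c_k}$ and then $\nabla c_k$ in $L^2L^2$.
\end{enumerate}
Aubin--Lions then yields strong convergence of $c_k^\epsilon$, $v^\epsilon$ in $L^2L^2$ and of $d^\epsilon$ in $L^2L^2$, with $|d|=1$ in the limit. The Poisson equation then gives strong convergence of $\nabla\Phi^\epsilon$ in $L^2$. For this we use $d^\epsilon\to d$ a.e. and an energy-type argument: test with $\Phi^\epsilon-\Phi$ and use uniform ellipticity. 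The higher integrability of $\nabla\Phi^\epsilon$ handles the quadratic terms $(\nabla\Phi\otimes\nabla\Phi)\varepsilon(d)$.

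The main obstacle is passing to the limit in the Ericksen stress $\nabla d^\epsilon\odot\nabla d^\epsilon$ and in the nonlinear director terms, since $\nabla d^\epsilon$ converges only weakly in $L^2$ and the energy may concentrate at points in 2D. For the director equation we use the standard device of taking the cross product with $d^\epsilon$: the equation $\gamma_1 d\times\od=d\times\Delta d+\dots$ is in divergence form, so it passes to the limit weakly. For the momentum equation we follow the concentration--cancellation strategy. Write $\nabla d^\epsilon\odot\nabla d^\epsilon-\frac12|\nabla d^\epsilon|^2 I\rightharpoonup \nabla d\odot\nabla d-\frac12|\nabla d|^2I+\nu$ with a defect measure $\nu$. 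A Pohozaev-type identity, obtained by testing the $d^\epsilon$-equation with $X\cdot\nabla d^\epsilon$ for smooth vector fields $X$, shows that the traceless part of the defect tends to zero wherever the tension field $\tau^\epsilon=\Delta d^\epsilon+\epsilon^{-2}(1-|d^\epsilon|^2)d^\epsilon$ is bounded in $L^2L^2$, which the energy dissipation provides. The penalty term must be shown to be negligible in the stress; it contributes only to the trace part, which is absorbed into the pressure. Concretely, we would first prove the two-dimensional identity
\[
\int(|\partial_1 d^\epsilon|^2-|\partial_2 d^\epsilon|^2)\partial_1X^1+\dots=\int\tau^\epsilon\cdot(X\cdot\nabla d^\epsilon)+\dots .
\]
From it we would deduce, via Cauchy--Schwarz on small balls and a covering argument (following Lin--Wang / Feireisl et al.), that the traceless measure defect vanishes in $L^1_t$. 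Thus $\nu$ is a multiple of the identity and is absorbed into the limit pressure. This yields the weak formulation and completes the proof.
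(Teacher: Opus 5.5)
The gap is in step (ii), where you assume $|d^\epsilon|$ stays bounded by a maximum principle. With the polynomial penalty this fails for this system. Dotting your director equation with $d^\epsilon$ gives
\[
\tfrac{\gamma_1}{2}(\pa_t+v^\epsilon\cdot\nabla)|d^\epsilon|^2-\tfrac12\Delta|d^\epsilon|^2+|\nabla d^\epsilon|^2
=\tfrac{1}{\epsilon^2}(1-|d^\epsilon|^2)|d^\epsilon|^2-\gamma_2\,d^\epsilon\cdot D(v^\epsilon)d^\epsilon+\varepsilon_a(\nabla\Phi^\epsilon\cdot d^\epsilon)^2 .
\]
The co-rotational term drops out because $\Omega(v^\epsilon)$ is antisymmetric. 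The other two terms cause the trouble:
\begin{itemize}
\item the stretching term $\gamma_2\,d^\epsilon\cdot D(v^\epsilon)d^\epsilon$ has no sign, and $D(v^\epsilon)$ is only bounded in $L^2_tL^2_x$;
\item the electric term is a nonnegative source.
\end{itemize}
So nothing bounds $|d^\epsilon|$ uniformly; the energy only gives $\int(1-|d^\epsilon|^2)^2\dx\lesssim\epsilon^2$. The paper flags exactly this: the coupled director equation has no maximum principle. It therefore uses a singular potential $\mathcal{F}_\epsilon$ equal to $+\infty$ for $|d|\ge 2$, so finite energy forces $|d^\epsilon|<2$.

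That bound is used throughout:
\begin{itemize}
\item the upper bound on $\varepsilon(d^\epsilon)$ in the elliptic estimates, which you acknowledge needing;
\item the Fisher-information estimate, where $c_k\varepsilon(d^\epsilon)\nabla\mu_k\cdot\nabla\mu_k$ is dominated by the dissipation so that the cross terms sum to $2\int(\rho^\epsilon)^2$;
\item the uniform bounds on $\sigma^\epsilon$, on $(\nabla\Phi^\epsilon\otimes\nabla\Phi^\epsilon)\varepsilon(d^\epsilon)$, and on the tension field.
\end{itemize}
You need a device of this kind that gives $\sup|d^\epsilon|\le C$ independently of $\epsilon$.

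Two further steps do not close as written.

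First, (ii) is circular. With anisotropic $\cD_k$ and $\varepsilon(d)$ the Nernst--Planck equations have no maximum principle. A Moser iteration needs $\nabla\Phi^\epsilon\in L^\infty_tL^p_x$ for some $p>2$, which in turn needs $c_k^\epsilon\in L^\infty_tL^r_x$ with $r>1$. That is more than the entropy gives. The paper breaks the loop by using the Fisher-information bound first:
\begin{itemize}
\item it yields $\int_0^T\|c_k^\epsilon\|_{L^2}^2\,\rd t<\infty$;
\item a Riccati-type differential inequality then gives $\sup_t\|c_k^\epsilon\|_{L^2}<\infty$;
\item hence $\nabla\Phi^\epsilon\in L^\infty_tL^p_x$ for all finite $p$;
\item only then does it run Moser.
\end{itemize}

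Second, the dissipation does not bound the tension field in $L^2$. It controls $\od^\epsilon$ and $D(v^\epsilon)d^\epsilon$, but not $\varepsilon_a(\nabla\Phi^\epsilon\otimes\nabla\Phi^\epsilon)d^\epsilon$. That term needs $\nabla\Phi^\epsilon$ uniformly in $L^4_x$, and a Meyers exponent slightly above $2$ does not give this. The Pohozaev remainder bound $\int_{B_r}|x|\,|\nabla d^\epsilon|\,|\tau^\epsilon|\dx\le Cr$ depends on it.

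Finally, your covering sketch omits the $\delta_0$-regularity lemma. That lemma makes the defect atomic at a.e.\ fixed time, which is what lets the paper apply the Pohozaev identity point by point: $X=x$ rules out concentration of the penalty energy, and $X=(x_1,0)$, $X=(x_2,0)$ kill the traceless coefficients.
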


  This paper is organized as follows: In Section~2, we introduce a Ginzburg--Landau
  approximated system, for which the existence of weak solutions has been
  established in \cite{feireisl2020weak}. We then derive several energy estimates that remain
  independent of the approximation parameter $\epsilon \in (0,1)$. In Section~3,
  we address the convergence of the Ericksen stress tensor, a critical term, and
  demonstrate a concentration cancellation using a Pohozaev-type argument.
  Finally, in Section~4, we take the limit as $\epsilon \to 0$ to obtain a weak
  solution to the PDE system~\eqref{eqn:MainPDE}.

  \section{The Ginzburg--Landau approximation and Uniform estimates}
A primary challenge in analyzing the system \eqref{eqn:MainPDE} arises from the nonconvex constraint $|d|=1$ (see \eqref{eqn:hard_constraint}). To address this difficulty, we employ a Ginzburg--Landau approximation, considering the following regularized system (with approximation parameter $\epsilon\in(0,1)$):
  \begin{equation}
    \left\{
    \begin{aligned}
      \pa_{t}\ck+v^{\epsilon}\cdot \nabla \ck                                         & =\nabla\cdot(\ck \cD_{k}\nabla \mk), \quad k=1,...,N   \\
      -\nabla\cdot (\varepsilon(\dep)\nabla \Phi^{\epsilon})                                  & =\sum_{k=1}^{N}z_{k}\ck=:\rho^{\epsilon},  \\
      \pa_{t}v^{\epsilon}+(v^{\epsilon}\cdot \nabla v^{\epsilon})+\nabla p^{\epsilon} & =- \nabla\cdot (\nabla \dep\odot \nabla \dep)+\nabla\cdot\sigma^{\epsilon}+ \nabla\cdot \left( (\nabla \Phi^{\epsilon}\otimes \nabla \Phi^\ve)\varepsilon(\dep) \right), \\
      \nabla\cdot v^{\epsilon}                                                        & =0,  \\
      \gamma_{1}\od^{\epsilon}+\gamma_{2}D(v^{\epsilon})\dep                        & = \Delta \dep-f_{\epsilon}(\dep)+\varepsilon_{a}(\nabla \Phi^{\epsilon}\otimes \nabla\Phi^{\epsilon})\dep,
    \end{aligned}
    \right. \label{eqn:GinzburgLandauApp1}
  \end{equation}
  where $f_{\epsilon}(d)=\partial \mathcal{F}_{\epsilon}(d)/\partial d$, and $\mathcal{F}_{\epsilon}(d)$
   is a singular Ginzburg--Landau potential:
  \begin{equation*}
    \mathcal{F}_{\epsilon}(d) =
    \begin{cases}
      \frac{1}{2\epsilon^2}F(|d|^2), & \text{if }|d|<2,    \\
      +\infty,                    & \text{if }|d| \geq 2.
    \end{cases}
  \end{equation*}
  It follows that for $|d|<2$, 
  \begin{equation}\label{eq:fed}
      f_{\ve}(d)=\frac{1}{\ve^{2}}F'(|d|^{2})\,d.
  \end{equation}
  Here, $F:[0, 4)\to [0, +\infty)$ is a smooth function satisfying the following properties:
  \begin{enumerate}[(F1)]
    \item $F \in C^{\infty}([0, 4);\mathbb{R}_+)$ and convex.
    \item \label{prop:F_min} $F(r) \ge 0$ for all $r \in [0, 4)$, and $F(r) = 0$ if and only if $r=1$.
    \item \label{prop:F_sing} $\lim_{r \to 4^-} F(r) = +\infty$.
  \end{enumerate}
  \begin{figure}[H]
  \centering
  \begin{tikzpicture}
    \begin{axis}[
      width=0.55\linewidth,   % adjust this
      height=5cm,             % and this
      axis lines=middle,
      xlabel={$r$},
      ylabel={$F(r)$},
      domain=0:1.95,
      samples=200,
      ymin=0,
      ymax=10,
      xtick={1,2},
      xticklabels={$1$, $4$},
      ytick=\empty,
      enlargelimits=true,
      grid=major
    ]
      \addplot[color=blue, thick] {(x - 1)^2 * (1 / (2 - x))};
      \draw[dashed, red] (axis cs:1,0) -- (axis cs:1,0.25);
      \draw[dotted, black] (axis cs:2,0) -- (axis cs:2,10);
    \end{axis}
  \end{tikzpicture}
  \vspace{-0.5em}  % optional: tightens the gap before the caption
  \caption{Plot of $F(r)$ with singularity at $r=4$.}
\end{figure}

  The system \eqref{eqn:GinzburgLandauApp1} differs from \eqref{eqn:MainPDE}
  primarily in the director equation, where the rigid constraint $|d|=1$ is
  relaxed via a Ginzburg--Landau penalization term $\mathcal{F}_{\epsilon}(d)$.
  Permitting the director field $d^{\epsilon}$ to deviate from the unit sphere
  improves its regularity from $H^1$ to $H^2$ \cite{feireisl2020weak}. A similar
  approach was utilized in \cite{lin1995nonparabolic} to establish the global
  existence of weak solutions for the simplified Ericksen--Leslie system
  (decoupled from the ionic concentrations $c$ and potential $\Phi$). The
  primary objective of this work is to analyze the singular limit as $\epsilon
  \to 0$ in \eqref{eqn:GinzburgLandauApp1}, recovering the rigid constraint
  $|d|=1$ via property (F\ref{prop:F_min}). While this limit has been
  established for the simplified Ericksen--Leslie system
  \cite{kortum2020concentration,du2022weak}, we extend the analysis here to the
  fully coupled nematic electrolyte model. In contrast, the fully coupled
  nematic electrolyte model includes additional nonlinear feedback between the
  director field, the ionic concentrations, and the electric potential. A key
  technical challenge is that the coupled director equation
  \eqref{eqn:GinzburgLandauApp1}$_5$ lacks a maximum principle. Consequently, the singular nature of the potential
  $\mathcal{F}_\ve$ (see F\ref{prop:F_sing}) is indispensable to strictly
  enforce the bound $|d^\epsilon| < 2$. For similar applications of singular
  potentials enforcing hard constraints, such as for the physical Q-tensor in
  the Landau--de Gennes model, we refer to
  \cite{feireislEvolutionNonisothermalLandau2014, wilkinson2015strictly,
  xu2022recent} and also in Cahn--Hilliard type equations 
  \cite{elliott1996cahn}.

  We consider a sequence of weak solutions $(c_1^\epsilon, \dots,
  c_N^\epsilon, \Phi^\epsilon, v^\epsilon, d^\epsilon)$ of \eqref{eqn:GinzburgLandauApp1}
  satisfying the initial conditions in Theorem \ref{thm:MainTheorem}, constructed as in
  \cite{feireisl2020weak}. These approximations enjoy uniform energy bounds
  together with the structural control provided by the singular potential
  $\mathcal{F}_\ve$. The singular potential is indispensable: it penalizes
  deviations from the unit-length constraint and enforces the strict bound
  $|d^\epsilon|<2$ in the approximate system. Our goal is to show that
  \begin{equation*}
  (c_1^\epsilon, \dots, c_N^\epsilon, \Phi^\epsilon, v^\epsilon, d^\epsilon)
  \end{equation*}
  converges to a solution $(c_1, \dots, c_N, \Phi, v, d)$ of \eqref{eqn:MainPDE} as $\epsilon \to 0$.

  \subsection{Energy estimate}
  We start with the energy estimate for \eqref{eqn:GinzburgLandauApp1}. Define the energy functional for the system \eqref{eqn:GinzburgLandauApp1} as
  \begin{equation*}
    E(t)=\int_{\T^2}\left( \frac{1}{2}|v^{\epsilon}|^{2}+\frac{1}{2}|\nabla \dep|
    ^{2}+\mathcal{F}_{\epsilon}(\dep)+\sum_{k=1}^{N}\ck\ln \ck+\frac{1}{2}\varepsilon(\dep)\nabla \Phi
    ^{\epsilon}\cdot \nabla \Phi^{\epsilon}\right)\dx. 
  \end{equation*}

  \begin{proposition}\label{prop:EnergyEst}
    \cite{feireisl2020weak, gieri2026}
    Let $(c^\ve_k, \Phi^\ve, v^\ve, d^\ve)$ be a sufficiently
    smooth solution to \eqref{eqn:GinzburgLandauApp1}.
    Then it holds that
    \begin{align}
         & E(t)+\sum_{k=1}^{N}\int_{0}^{t}\int_{\T^2}\alpha\ck|\nabla\mu^\ve_k|^{2}\dx\ds   \label{eqn:Energylaw}\\
         & +\int_{0}^{t}\int_{\T^2}(\alpha_{4}|D(v^{\epsilon})|^{2}+(\gamma_2+\alpha_2+\alpha_3)(\od^{\epsilon}\cdot D(v^{\epsilon}) \dep)+\alpha_1(\dep\cdot D(v^{\epsilon}) \dep)^2\nonumber\\
         &\qquad+(\alpha_{5}+\alpha_{6}) |D(v^{\epsilon}) \dep|^{2}+\gamma_1|\od^{\epsilon}|^{2})\dx\ds \nonumber\\
         & \le E(0).\nonumber
    \end{align}
  \end{proposition}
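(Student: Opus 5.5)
The plan is to differentiate each piece of $E(t)$ in time, test each equation of \eqref{eqn:GinzburgLandauApp1} with the matching variational multiplier, and add the results so that all coupling terms cancel. The remaining integrand should then be the dissipation in \eqref{eqn:Energylaw}.

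\textbf{Ion species.} Test \eqref{eqn:GinzburgLandauApp1}$_1$ with $\mk=\ln\ck+z_k\Phi^\ve$. Since $\int\ck\,\partial_t(\ln\ck)\,\dx=\frac{d}{dt}\int\ck\,\dx=0$, the time derivative produces $\frac{d}{dt}\int\ck\ln\ck\,\dx+\int z_k\partial_t\ck\,\Phi^\ve\,\dx$. The transport term with $\ln\ck$ vanishes by $\nabla\cdot v^\ve=0$, while the transport term with $z_k\Phi^\ve$ leaves $\int z_k(v^\ve\cdot\nabla\ck)\Phi^\ve\,\dx$. The diffusion gives $-\int\ck\cD_k\nabla\mk\cdot\nabla\mk\,\dx\le-\alpha\int\ck|\nabla\mk|^2\,\dx$ by (A2).

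\textbf{Electrostatic term.} Sum over $k$ and use the Poisson equation \eqref{eqn:GinzburgLandauApp1}$_2$, differentiated in time:
\[
\int\partial_t\rho^\ve\,\Phi^\ve\,\dx=\int\partial_t\big(\varepsilon(\dep)\nabla\Phi^\ve\big)\cdot\nabla\Phi^\ve\,\dx=\frac{d}{dt}\int\tfrac12\varepsilon(\dep)\nabla\Phi^\ve\cdot\nabla\Phi^\ve\,\dx+\tfrac12\int(\partial_t\varepsilon(\dep))\nabla\Phi^\ve\cdot\nabla\Phi^\ve\,\dx .
\]
The last term equals $\varepsilon_a\int\big((\nabla\Phi^\ve\otimes\nabla\Phi^\ve)\dep\big)\cdot\partial_t\dep\,\dx$, so it pairs with the dielectric torque in the director equation. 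The term $\int(v^\ve\cdot\nabla\rho^\ve)\Phi^\ve\,\dx=-\int\rho^\ve v^\ve\cdot\nabla\Phi^\ve\,\dx$ will have to cancel against the Maxwell stress in the momentum equation. To see this, compute $\nabla\cdot\big((\nabla\Phi\otimes\nabla\Phi)\varepsilon(d)\big)$ tested against $v$ and compare it with $\rho\nabla\Phi$ plus the gradient $\nabla(\frac12\varepsilon\nabla\Phi\cdot\nabla\Phi)$, which is absorbed into the pressure. This leaves a term of the form $-\frac{\varepsilon_a}{2}\nabla\Phi\cdot(\partial_i(d\otimes d))\nabla\Phi\,v^i$, which must match the transport part $v\cdot\nabla d$ of $\partial_t d$ coming from the $\od$ form of the director equation.

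\textbf{Momentum and director.} Test the momentum equation with $v^\ve$, and the director equation with $-\big(\partial_t\dep+v^\ve\cdot\nabla\dep\big)$, written as $-\big(\od^\ve+\Omega(v^\ve)\dep\big)$. The standard Ericksen--Leslie bookkeeping then applies:
\begin{itemize}
\item the Ericksen stress $\nabla\cdot(\nabla\dep\odot\nabla\dep)$ cancels the transport of $\frac12|\nabla\dep|^2+\mathcal F_\ve(\dep)$, using $\nabla\cdot v^\ve=0$;
\item the $\Omega$-terms cancel through the antisymmetric part of $\sigma$, using $\gamma_1=\alpha_3-\alpha_2$ and $\gamma_2=\alpha_6-\alpha_5$;
\item $\sigma^\ve:D(v^\ve)$ together with $\gamma_1|\od|^2+\gamma_2\od\cdot D\dep$ gives exactly the quadratic form displayed in \eqref{eqn:Energylaw}.
\end{itemize}
The penalization gives $\frac{d}{dt}\int\mathcal F_\ve(\dep)\,\dx$, which is valid because $|\dep|<2$.

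\textbf{Main obstacle.} The delicate step is matching the dielectric torque $\varepsilon_a(\nabla\Phi\otimes\nabla\Phi)\dep\cdot(\partial_t\dep+v\cdot\nabla\dep)$ and its $\Omega$-part against the Maxwell stress. The antisymmetric part of $(\nabla\Phi\otimes\nabla\Phi)\varepsilon(d)$ must cancel $\varepsilon_a(\nabla\Phi\otimes\nabla\Phi)d\cdot\Omega d$. I would verify this by writing $(\nabla\Phi\otimes\nabla\Phi)d\otimes d$ and taking its contraction with $\nabla v=D+\Omega$.

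Integrating in time then yields \eqref{eqn:Energylaw}, in fact with equality up to the use of $\alpha$ from (A2), which is why the statement is an inequality.
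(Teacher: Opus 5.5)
Your proposal is correct. The paper does not prove this proposition itself but takes it from the works cited with it, and your argument is the standard energy computation behind it: test with $\mu_k^\epsilon$, $v^\epsilon$ and $\partial_t d^\epsilon+v^\epsilon\cdot\nabla d^\epsilon$; use the time-differentiated Poisson equation for the $\partial_t d^\epsilon$ part of the dielectric torque; use the divergence identity for $(\nabla\Phi^\epsilon\otimes\nabla\Phi^\epsilon)\varepsilon(d^\epsilon)$ for the $\rho^\epsilon v^\epsilon\cdot\nabla\Phi^\epsilon$ and $v^\epsilon\cdot\nabla d^\epsilon$ parts; and use the compatibility conditions (A4) for the Leslie stress.

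One tidying remark on your last paragraph. Once you use that full divergence identity with the test function $\partial_t d^\epsilon+v^\epsilon\cdot\nabla d^\epsilon$, nothing is left over. The pointwise identity $\big((\nabla\Phi\otimes\nabla\Phi)\varepsilon(d)\big):\Omega(v)=\varepsilon_a\big((\nabla\Phi\otimes\nabla\Phi)d\big)\cdot\Omega(v)d$ is true, but it is needed only if you split the test function as $\od^\epsilon+\Omega(v^\epsilon)d^\epsilon$ and $\nabla v^\epsilon=D(v^\epsilon)+\Omega(v^\epsilon)$. It is an alternative bookkeeping of the same cancellation, not an extra one, and you must not count it on top of the divergence identity.
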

Throughout the remainder of the paper we write $E_{0}:=E(0)$ for the initial energy.
Furthermore, from the assumptions for the parameters $\alpha_1, \alpha_4, \alpha_5, \alpha_6$ (A\ref{assump:viscosity}), it follows from \eqref{eqn:Energylaw} and Korn's inequality that 
 $v^\ve$ is uniformly bounded in 
 \begin{equation*}
 L^\infty(0,T; L_{\rm div}^2(\T^2))\cap L^2(0,T;H_{\rm div}^1(\T^2)),
\end{equation*} 
i.e., 
\begin{equation*}
  \sup_{t \in [0,T]} \int_{\T^2} |v^\ve|^2 \dx + \int_0^T \int_{\T^2}|\nabla v^\ve|^2 \dx \rd t \le C E(0).
\end{equation*}
and
\begin{equation*}
  \sup_{t\in[0,T]}\int_{\T^2}|\nabla d^\ve(t)|^2 \dx+\int_0^T\int_{\T^2}(|\od^\ve|^2+|D(v^\ve) d^\ve|^2) \dx \rd t \le C E(0).
\end{equation*}

  \subsection{Uniform $L^\infty_tL^{2}_x\cap L^2_tH^1_x$ estimates for $\ck$. Uniform $L^\infty_tL^p_x$ estimates for $\nabla\Phi$}
  \label{ul2}

   We know that $\|\ck\|_{L^1}$ are
  uniformly bounded in time. In addition, from the previous energy estimate, we
  know that
  \begin{equation}
    \begin{split}
      \|\nabla \Phi^{\epsilon}(t)\|_{L^2}^{2}+ &\sum_{k=1}^{N}\int_{0}^{t}\int_{\T^2}
      \ck\varepsilon(d^\ve)\nabla\mk\cdot\nabla\mk \dx\ds \\
      &\lesssim \|\nabla \Phi^{\epsilon}(t)\|_{L^2}^{2}+ \sum_{k=1}^{N}\int_{0}^{t}\int_{\T^2}\ck|\nabla\mk |^{2}\dx\ds\le C
    \end{split}
  \end{equation}
  for some constant $C>0$ independent of $t$. Thus,
  \begin{align*}
    &\sum_{k=1}^{N}\int_{0}^{t}\int_{\T^2}\varepsilon_\perp\frac{|\nabla \ck|^{2}}{\ck}\dx \ds+ \sum_{k=1}^{N}\int_{0}^{t}\int_{\T^2}\varepsilon_\perp z_k^2\ck|\nabla\Phi^{\ve}|^{2}\dx\ds \\
    &\quad + 2\sum_{k=1}^{N}\int_{0}^{t}\int_{\T^2}z_{k}\nabla\ck\cdot \varepsilon(d^\ve)\nabla\Phi^{\epsilon}\dx\ds\le C
  \end{align*}
  from which it follows that
  \begin{align*}
        & \sum_{k=1}^{N}\int_{0}^{t}\int_{\T^2}\varepsilon_\perp\frac{|\nabla \ck|^{2}}{\ck}\,\dx \ds+ \sum_{k=1}^{N}\int_{0}^{t}\int_{\T^2}\varepsilon_\perp z_k^2\ck|\nabla\Phi^{\epsilon}|^{2}\dx\ds \\
        & \qquad +2\sum_{k=1}^{N}\int_{0}^{t}\int_{\T^2}\nabla(z_{k}\ck)\cdot(\varepsilon(d^\ve)\nabla\Phi^{\epsilon})\,\dx \ds\\
    =   & \sum_{k=1}^{N}\int_{0}^{t}\int_{\T^2}\varepsilon_\perp\frac{|\nabla \ck|^{2}}{\ck}\,\dx\ds + \sum_{k=1}^{N}\int_{0}^{t}\int_{\T^2}\varepsilon_\perp z_k^2 c_{k}^\ve|\nabla\Phi^{\epsilon}|^{2}\,\dx \ds
        +2\int_{0}^{t}\int_{\T^2}(\rho^{\epsilon})^{2}\,\dx\ds                                     \\
    =   & 4\sum_{k=1}^{N}\int_{0}^{t}\int_{\T^2}\varepsilon_\perp|\nabla \sqrt{\ck}|^{2}\,\dx\ds + \sum_{k=1}^{N}\int_{0}^{t}\int_{\T^2}\varepsilon_\perp z_k^2\ck|\nabla\Phi^{\epsilon}|^{2}\dx \ds
       +2\int_{0}^{t}\int_{\T^2}(\rho^{\epsilon})^{2}\,\dx \ds                                           \\
    \le & C.
  \end{align*}
  In particular, these calculations show that for each $k$,
  \begin{equation}
    \label{fisher}\sup_{t\in[0,T)}\int_{0}^{t}\int_{\T^2}|\nabla \sqrt{\ck}|^{2}\,\dx < C<
    \infty.
  \end{equation}
  We shall use this to show that $\|\ck\|_{L^2}$ are bounded uniformly on all
  finite time intervals. To do so, multiply the evolution equation for $\ck$ by
  $\ck$, sum in $k$, and integrate by parts to obtain
  \begin{align*}
    &\frac{1}{2}\frac{d}{dt}\sum_{k=1}^{N}\int_{\T^2}(\ck)^{2}\,\dx \\= & -\sum_{k=1}^{N}\int_{\T^2}\ck\mathcal{D}_{k}\nabla \mk\cdot \nabla \ck\,\dx\\
    = & -\sum_{k=1}^{N}\int_{\T^2}\mathcal{D}_{k}\nabla \ck\cdot \nabla \ck\,\dx -\sum_{k=1}^{N}\int_{\T^2}z_{k}\ck\mathcal{D}_{k}\nabla \Phi^{\epsilon}\cdot \nabla \ck\,\dx\\
    \lesssim & - \sum_{k=1}^{N}\|\nabla\ck\|_{L^2}^2
    + \sum_{k=1}^{N}\|\nabla \Phi^{\epsilon}\|_{L^2}^{\frac{1}{2}}\|\rho^{\epsilon}\|_{L^2}^{\frac{1}{2}}(\|\ck\|_{L^2}^{\frac{1}{2}}\|\nabla\ck\|_{L^2}^{\frac{1}{2}}+\|\ck\|_{L^2})\|\nabla \ck\|_{L^2} \\
    \lesssim & - \sum_{k=1}^N\|\nabla\ck\|_{L^2}^2+\|\rho^{\epsilon} \|_{L^2}^\frac{1}{2}\sum_{k=1}^N \|\ck\|_{L^2}^\frac{1}{2}\|\nabla \ck\|_{L^2}^\frac{3}{2}
    +\|\rho^\epsilon \|_{L^2}^\frac{1}{2} \sum_{k=1}^N \|\ck\|_{L^2}\|\nabla \ck\|_{L^2}\\
    \lesssim & - \sum_{k=1}^{N}\|\nabla\ck\|_{L^2}^2+\left(\sum_{k=1}^N \|\ck\|_{L^2}^\frac{1}{2}\right)^2\sum_{k=1}^N\|\nabla \ck\|_{L^2}^\frac{3}{2}
    + \left(\sum_{k=1}^N\|\ck\|_{L^2}\right)^\frac{3}{2} \sum_{k=1}^N\|\nabla \ck\|_{L^2}\\
    \lesssim & - \sum_{k=1}^{N}\|\nabla\ck\|_{L^2}^2 + \left(\sum_{k=1}^{N}\|\ck\|_{L^2}^{2}\right)^{2}+1
  \end{align*}
  where in the fourth line, we used the interpolation inequality $\|f\|_{L^4}\lesssim
  \|f\|_{L^2}^{\frac{1}{2}}\| f\|_{H^1}^{\frac{1}{2}}$ (for
  $f=\nabla\Phi^{\ve})$, and we also used the fact that $\|\Phi^{\ve}\|_{H^2}\lesssim
  \|\rho^{\ve}\|_{L^2}$ from elliptic regularity. In the fifth line, we used the
  fact that $\sup_{t}\|\nabla\Phi^{\ve}(t)\|_{L^2}$ is finite. In the sixth line, we used $\rho^\epsilon=\sum_kz_k\ck$ and the fact that for finite sums, for any $p\ge 0$, we have $\sum_k 
  |a_k|^p\lesssim \left(\sum_k |a_k|\right)^p\lesssim \sum_k|a_k|^p$. In the last line, we use this fact again, in addition to Young's inequality: for the second term, Young's inequality with exponents $4$ and $\frac{4}{3}$ respectively, and for the last term, exponents $\frac{8}{3}, 2$ and $8$, where it is understood that the exponent $8$ falls on the constant $1$, leading to the addition of a constant in the last line.

  Thus we arrive at the differential inequality
  \begin{align}\label{diffeq}
    \frac{d}{dt}\sum_{k=1}^{N}\|\ck\|_{L^2}^{2}\lesssim  - \sum_{k=1}^N \|\nabla \ck\|_{L^2}^2+\left(\sum_{k=1}^{N}\|\ck\|_{L^2}^{2}\right)^{2}+1
  \end{align}
  from which we conclude that
  \begin{align}
 \sum_{k=1}^{N}\|\ck(t)\|_{L^2}^{2}\le &\left(CT+\sum_{k=1}^{N}\|\ck(0)\|_{L^2}^{2}\right)    \label{l2est}
     \exp\left(C\int_{0}^{t}\sum_{k=1}^{N}\|\ck(s)\|_{L^2}^{2}\ds\right).
  \end{align}
      Indeed, given an absolutely continuous function $X:[0,T]\to\mathbb{R}^+$
  satisfying $\frac{dX}{dt}\le CX^{2}+C$, we have
  \begin{align*}
    &\frac{d}{dt}\left(X(t)\exp\left(-C\int_{0}^{t}X(s)\ds\right)\right) \\
    &\qquad= \left(\frac{dX}{dt}(t)-CX^{2}(t)\right)\exp\left(-C\int_{0}^{t}X(s)\ds\right)\le C.
  \end{align*}
  Integrating this inequality, we obtain
  $X(t)\le (CT+X(0))\exp\left(C\int_{0}^{t}X(s)\ds\right)$. Taking $X = \sum_{k=1}^{N}
  \|\ck\|_{L^2}^{2}$, this gives us \eqref{l2est}.

  On the other hand, from \eqref{fisher} we obtain
  \begin{align*}
    \int_{0}^{t}\|\ck(s)\|_{L^2}^{2}\ds & = \int_{0}^{t}\|\sqrt{\ck(s)}\|_{L^4}^{4}\ds                \\
     & \lesssim t+\int_{0}^{t}\|\nabla\sqrt{\ck(s)}\|_{L^2}^{2}\ds \\
     & \le C_{t}<\infty
  \end{align*}
  where in the second line, we used the interpolation inequality
  $\|f\|_{L^4}\lesssim \|f\|_{L^2}^{\frac{1}{2}}\|\nabla f\|_{L^2}^{\frac{1}{2}}+
  \|f\|_{L^2}$
  together with the fact that $\|\sqrt{\ck}\|_{L^2}^{2}=\|\ck\|_{L^1}$ is
  constant in time. From the preceding estimate and \eqref{l2est}, we conclude that
  \begin{equation}
    \label{cl2}\sup_{t\in[0,T)}\|\ck(t)\|_{L^2}<\infty
  \end{equation}
  for each $k$ and finite $T$. For later use, we state that, from the Poisson
  equation for $\Phi^{\epsilon}$, elliptic regularity, and the embeddings
  $H^{2}\hookrightarrow W^{1,p}$ for all finite $p$, an immediate consequence of
  \eqref{cl2} is that
  \begin{equation}
    \label{philp}\sup_{t\in[0,T)}\|\nabla\Phi^{\epsilon}(t)\|_{L^p}<\infty
  \end{equation}
  for all finite $p$ and $T$.
  We also note that as a consequence of \eqref{cl2} and \eqref{diffeq}, we have 
  \begin{align}\label{ch1}
      \sup_{t\in[0,T)}\int_0^t\|\nabla \ck(s)\|_{L^2}^2\,\ds<\infty.
  \end{align}

  \subsection{Uniform $L^{\infty}_tL^\infty_x$ estimates for $\ck$ and $\nabla\Phi$}
  In this subsection, we bootstrap from \eqref{cl2} to establish, for each $k$,
  \begin{equation}
    \label{alllp}\sup_{t\in[0,T)}\|\ck(t)\|_{L^p}<\infty
  \end{equation}
  for all finite $T$ and all $p\in[2,\infty]$.  We note, for future reference, that the above estimate, together with $\eqref{eqn:GinzburgLandauApp1}_2$ and elliptic regularity,  also implies
  \begin{align}   \label{phiin}   \sup_{t\in[0,T)}\|\nabla\Phi^\ve(t)\|_{L^\infty}<\infty.
  \end{align}
  Accordingly, we assume in this
  subsection that $T$ is fixed, and constants may depend on $T$.

  To begin, we multiply the evolution equation for $\ck$ by $(\ck)^{2m-1}$ for
  $m=2,3,4,...$ and integrate by parts, to obtain
  \begin{align*}
    &\frac{1}{2m}\frac{d}{dt}\int_{\T^2}(\ck)^{2m}\dx+\alpha(2m-1)\int_{\T^2}(\ck)^{2m-2}|\nabla \ck|^{2}\dx \\
    &\qquad\lesssim-(2m-1)\int_{\T^2} z_k(\ck)^{2m-1}(\mathcal{D}_k\nabla\Phi^{\epsilon}\cdot\nabla \ck)\dx.
\end{align*}
Hence,
    \begin{align*}\frac{1}{2m}\frac{d}{dt}\int_{\T^2}(\ck)^{2m}\dx+\alpha\frac{2m-1}{m^{2}}\|\nabla (\ck)^{m}\|_{L^2}^{2}\lesssim -\frac{2m-1}{m}\int_{\T^2}z_k(\ck)^{m}(\mathcal{D}_k\nabla\Phi^{\epsilon}\cdot\nabla (\ck)^{m})\dx.
  \end{align*}
  We point out that above, and for the remainder of this subsection, the constants
  implied by $\lesssim$ are independent of $m$.

  Multiplying the above estimate by $2m$, we obtain
  \begin{align}
    \label{cm}
    \frac{d}{dt}\|(\ck)^{m}\|_{L^2}^{2}&+\alpha\|\nabla (\ck)^{m}\|_{L^2}^{2} \nonumber \\
    &\le C(m)\left|\int_{\T^2}(\ck)^{m}(\mathcal{D}_k\nabla\Phi^{\epsilon}\cdot\nabla (\ck)^{m})\dx\right|,
  \end{align}
  where above, and in the remainder of the subsection, $C(m)$ refers to a
  constant that grows at most polynomially in $m$. Specifically, there exist
  $C, j>0$ such that $m^{j}\le C(m)\le Cm^{j}$ for $m=1,2,3,...$, but the $C$ and
  $j$ may differ from line to line.

  We estimate the integral on the right hand side, using \eqref{philp} and the interpolation
  inequality $\|f\|_{L^3}\lesssim \|f\|_{L^2}^{\frac{1}{2}}\|f\|_{H^1}^{\frac{1}{2}}$,
  \begin{align}
\label{int1}C(m)\left|\int_{\T^2}(\ck)^{m}\mathcal{D}_k\nabla\Phi^{\epsilon}\cdot\nabla(\ck)^{m}\dx\right|\le\nonumber & C(m)\|\nabla\Phi^{\epsilon}\|_{L^6}\|\nabla (\ck)^{m}\|_{L^2}\|(\ck)^{m}\|_{L^3} \\
    \le                          & \frac{\alpha}{2}\|\nabla(\ck)^{m}\|_{L^2}^{2}+C(m)\|(\ck)^{m}\|_{L^2}^{2}.
  \end{align}
  We further interpolate $\|(\ck)^{m}\|_{L^2}$ using the estimate
  $\|(\ck)^{m}\|_{L^2}^{2}\le \delta\|\nabla (\ck)^{m}\|_{L^2}^{2}+C\delta^{-1}\|
  (\ck)^{m}\|_{L^1}^{2}$, which, by taking $\delta$ small enough (depending on $m$),
  combined with \eqref{int1} yields
  \begin{align}
    \label{l1}C(m)\left|\int_{\T^2}(\ck)^{m}\nabla\Phi^{\epsilon}\cdot\nabla(\ck)^{m}\dx\right|\le \alpha\|\nabla(\ck)^{m}\|_{L^2}^{2}+C(m)\|(\ck)^{m}\|_{L^1}^{2}.
  \end{align}
  Thus, from \eqref{cm} and \eqref{l1}, we obtain

  \begin{align*}
    \frac{d}{dt}\|(\ck)^{m}\|_{L^2}^{2}\le C(m) \|(\ck)^{m}\|_{L^1}^{2},
  \end{align*}
  that is,
  \begin{align}
    \label{l2m}\frac{d}{dt}\|\ck\|_{L^{2m}}^{2m}\le C(m) \|\ck\|_{L^m}^{2m}.
  \end{align}
  Denoting
  $S_{m}(t) = \max\{\|\ck(0)\|_{L^\infty}, \sup_{s\in[0,t]}\|\ck(s)\|_{L^{m}}\}$
  and integrating \eqref{l2m} from $0$ to $t\in[0,T]$, we obtain
  \begin{align*}
    S_{2m}^{2m}(t) \le C(m)(1+t) S_{m}^{2m}(t)\le C(m)(1+T)S_{m}^{2m}(t),
  \end{align*}
  and so
  \begin{align*}
    S_{2m}(t)\le (C(m)(1+T))^{\frac{1}{2m}}S_{m}(t)\le C^{\frac{1}{2m}}m^{\frac{j}{2m}}S_{m}(t).
  \end{align*}
  Thus, considering $m = 2, 2^{2}, 2^{3},..., 2^{l}$, we have
  \begin{align*}
    S_{2^{l+1}}(t)\le C^{\frac{1}{2^{l+1}}}2^{\frac{jl}{2^{l+1}}}S_{2^l}(t),
  \end{align*}
  from which it follows that
  \begin{align}
    \label{S}S_{2^{l+1}}(t) \le C^{a_1}2^{a_2}S_{2}(t),
  \end{align}
  where $a_{1}= \sum_{l=1}^{\infty}\frac{1}{2^{l+1}}<\infty$ and
  $a_{2}= \sum_{l=1}^{\infty}\frac{jl}{2^{l+1}}<\infty$. Since the right hand side
  of \eqref{S} is independent of $l$, we pass to the limit $l\to\infty$ in \eqref{S}
  and find, from the definition of $S_{m}(t)$, that
  \begin{align}
    \label{lil2}\sup_{s\in[0,t]}\|\ck(s)\|_{L^\infty}\lesssim S_{2}(t).
  \end{align}
  The right hand side of \eqref{lil2} is finite, for all $t$, from subsection \ref{ul2};
  thus \eqref{alllp} is established.

\subsection{Positivity of $\ck$.} A similar iteration scheme as in the previous subsection, applied to the quantity $\frac{1}{\ck+\delta}$, and taking the limit $\delta\to 0^+$, yields positivity of $\ck$. We refer the reader to \cite{Lee23} for details.

\subsection{Uniform $L^{\infty}_t H^1_x$ estimates for $\ck$} Multiplying $\eqref{eqn:GinzburgLandauApp1}_1$ by $-\Delta \ck$ and integrating by parts, we obtain
\begin{align*}
    \frac{d}{dt}\|\nabla \ck\|_{L^2}^2+C\|\Delta \ck\|_{L^2}^2&\le C(\|v^\ve\|_{L^4}\|\nabla \ck\|_{L^4}+\|\nabla\Phi\|_{L^4}\|\nabla \ck\|_{L^4}+\|\Delta \Phi\|_{L^4}\|\ck\|_{L^4})\|\Delta \ck\|_{L^2}.
\end{align*}
Using the interpolation inequality $\|\nabla \ck\|_{L^4}\le C\|\nabla \ck\|_{L^2}^\frac{1}{2}\|\Delta \ck\|_{L^2}^\frac{1}{2}$ and the fact that $\ck$ is uniformly bounded in $L^\infty$ together with elliptic estimates applied to $\Phi$, the above inequality becomes
\begin{align*}
    \frac{d}{dt}\|\nabla \ck\|_{L^2}^2+C\|\Delta \ck\|_{L^2}^2&\le C((\|v^\ve\|_{L^4}+1)\|\nabla \ck\|_{L^2}^\frac{1}{2}\|\Delta \ck\|_{L^2}^\frac{1}{2}+1)\|\Delta \ck\|_{L^2}.
\end{align*}
Then, applying Young's inequality and absorbing into the left hand side, we obtain
\begin{align*}
    \frac{d}{dt}\|\nabla \ck\|_{L^2}^2+C\|\Delta \ck\|_{L^2}^2\le C+C(1+\|v^\ve\|_{L^4}^4))\|\nabla \ck\|_{L^2}^2.
\end{align*}
Then, applying Grönwall's inequality together with the fact that $v^\ve$ is uniformly bounded in $L^\infty_t L^2_x\cap L^2_tH^1_x\subset L^4_t L^4_x$, we conclude that 
\begin{align}\label{nac}
    \sup_{t\in[0,T)}\|\nabla \ck(t)\|_{L^2}<\infty.
\end{align}

  \section{Energy concentration and cancellation}
  \subsection{$\delta_{0}$-compactness}
  In this section, we state the $\delta_{0}$-compactness property for the
  family of director fields $\{d^{\ve}\}_{0<\ve<1}$ which yields the
  finiteness of the ``bad'' set on which the $H^{1}$ convergence fails.

  Consider the Ginzburg--Landau approximation with tensor field $\{\tau^{\ve}\}_{0<\ve<1}
  \subseteq L^{2}(\T^{2};\R^{3})$:
  \begin{equation}
    \Delta d^{\ve}-f_{\ve}(d^{\ve})=\tau^{\ve}\text{ in }\T^{2}. \label{eqn:GLtensor}
  \end{equation}
  We further assume that for $0<\Lambda_{1}, \Lambda_{2}<\infty$,
  \begin{equation}
    \sup_{0<\ve<1}\int_{\T^2}\left(\frac{1}{2}|\nabla d^{\ve}|^{2}+\mathcal{F}_{\ve}(d^{\ve}
    )\right)\rd x\le \Lambda_{1}, \label{eqn:Lambda1}
  \end{equation}
  and
  \begin{equation}
    \sup_{0<\ve<1}\|\tau^{\ve}\|_{L^2(\T^2)}\le \Lambda_{2}.\label{eqn:Lambda2}
  \end{equation}
  Then we can select a subsequence of $\{d^{\ve}\}_{0<\ve<1}$ (which we still denote by $\{d^\ve\}$ for simplicity) such that $d^{\ve}\rightharpoonup
  d$ in $H^{1}(\T^{2})$ and $d^{\ve}\rightarrow d$ in $L^{2}(\T^{2})$. The
  following lemma will play an essential role in our analysis:
  \begin{lemma}
    \label{lemma:compactness} There exists a constant $\delta_{0}=\delta_{0}(\Lambda
    _{1}, \Lambda_{2})>0$ such that if $\{d^{\ve}\}_{0<\ve<1}$ is a family of solutions
    to \eqref{eqn:GLtensor} satisfying \eqref{eqn:Lambda1}, \eqref{eqn:Lambda2},
    and if for some $x_{0}\in \T^{2}$ and $r_{0}>0$,
    \begin{equation}
      \sup_{0<\ve <1}\int_{B_{r_0}(x_0)}\left(\frac{1}{2}|\nabla d^{\ve}|^{2}+\mathcal{F}_{\ve}
      (d^{\ve})\right)\rd x\le \delta_{0}^{2},
    \end{equation}
    then there exists a map $d\in H^1(B_{{r_0}/4}(x_0);\mathbb{S}^2)$ such that after passing to a subsequence (still denoted by $d^\ve$), $d^{\ve}\to d$ in
    $H^{1}(B_{r_0/4}(x_{0}))$.
  \end{lemma}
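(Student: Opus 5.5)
This is the $\delta_0$-compactness lemma for Ginzburg--Landau approximations with $L^2$ tension fields, as in \cite{lin1995nonparabolic,du2022weak,kortum2020concentration}. I would follow the Chen--Struwe / Lin--Wang small-energy regularity scheme, adapted to the singular potential $\mathcal{F}_\ve$.

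\textbf{Step 1: a uniform bound on the modulus.} Set $\varphi=1-|d^\ve|^2$. Dotting \eqref{eqn:GLtensor} with $d^\ve$ and using \eqref{eq:fed} gives
\begin{equation*}
-\tfrac12\Delta\varphi=\Delta d^\ve\cdot d^\ve+|\nabla d^\ve|^2=|\nabla d^\ve|^2+\tfrac{1}{\ve^2}F'(|d^\ve|^2)|d^\ve|^2+\tau^\ve\cdot d^\ve .
\end{equation*}
Since $|d^\ve|<2$ is guaranteed by (F\ref{prop:F_sing}), we have $|\tau^\ve\cdot d^\ve|\le 2|\tau^\ve|$. Because $F$ is convex with minimum at $1$, $F'(r)(r-1)\ge 0$. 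I would then run a Moser / small-energy argument for this scalar equation on $B_{r_0/2}(x_0)$, using the smallness of $\int(|\nabla d^\ve|^2+\mathcal{F}_\ve)$ on $B_{r_0}(x_0)$. This should give that $\big||d^\ve|-1\big|$ is small uniformly, say $|d^\ve|\ge 1/2$ on $B_{r_0/2}(x_0)$. Alternatively, one can argue via a Courant--Lebesgue/Fubini choice of circles plus the $\ve$-scaled Bethuel--Brezis--Hélein argument, which gives $|d^\ve|\to 1$ locally uniformly under small energy. I would also track $\frac{1}{\ve^2}F'(|d^\ve|^2)(1-|d^\ve|^2)$, which is a nonnegative term, and obtain an $L^1$ (in fact $L^2$ after refinement) bound for $f_\ve(d^\ve)$ on $B_{r_0/2}$.

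\textbf{Step 2: a uniform $H^2$-type estimate.} Once $|d^\ve|\ge 1/2$, write $d^\ve=\rho^\ve\omega^\ve$ with $\omega^\ve\in\mathbb{S}^2$. The tangential part satisfies a harmonic-map-type equation,
\begin{equation*}
\Delta\omega^\ve+|\nabla\omega^\ve|^2\omega^\ve=\text{(terms controlled by }\tau^\ve,\ \nabla\rho^\ve\text{)},
\end{equation*}
in which $f_\ve$ drops out because it is radial. Multiply by cutoffs and use the 2D Ladyzhenskaya inequality $\|\nabla\omega\|_{L^4(B_r)}^2\lesssim \|\nabla\omega\|_{L^2(B_{2r})}\|\nabla^2\omega\|_{L^2}+\ldots$. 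Absorbing $\int|\nabla\omega|^4\le C\delta_0^2\int|\nabla^2\omega|^2+\ldots$ into the left side for $\delta_0$ small gives
\begin{equation*}
\|\nabla^2\omega^\ve\|_{L^2(B_{r_0/4})}\le C(\Lambda_1,\Lambda_2,r_0).
\end{equation*}
For the radial part, the equation for $\rho^\ve$ carries the coercive term $\ve^{-2}F'(\rho^2)\rho$. Testing it with $(1-\rho^2)\eta^2$ yields $\int\eta^2|\nabla\rho^\ve|^2\to 0$ together with $\int \ve^{-2}F'(\cdots)(1-\rho^2)\to 0$.

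\textbf{Step 3: conclusion.} Rellich applied to $\omega^\ve$, which is bounded in $H^2(B_{r_0/4})$, gives strong $H^1$ convergence to some $\omega$. Since $\nabla\rho^\ve\to 0$ in $L^2$ and $\rho^\ve\to1$, we get $d^\ve\to d=\omega$ in $H^1(B_{r_0/4})$, and $|d|=1$ follows from $\int\mathcal{F}_\ve\le\Lambda_1$ with $\ve\to0$ together with (F\ref{prop:F_min}).

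The main obstacle is Step 1, the uniform closeness $|d^\ve|\approx1$ under small energy. With the singular, merely convex $F$ there is no maximum principle, and the source $\tau^\ve$ is only in $L^2$. I would first try the scaling argument: rescale by $\ve$ and use the $L^2$ bound on $\tau^\ve$, which becomes $\ve\|\tau\|$-small after scaling, together with interior $W^{2,2}\subset C^{0,\alpha}$ estimates on $\ve$-balls. This shows that a point with $|d^\ve|\le1/2$ forces energy $\gtrsim c>0$ on a ball of radius $\sim\ve$, contradicting the smallness $\delta_0^2$.
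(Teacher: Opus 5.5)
Your architecture matches the paper's. You keep $d^\ve$ near $\mathbb{S}^2$ by an $\ve$-scale H\"older estimate plus an energy contradiction. You use the polar decomposition $d^\ve=\rho_\ve\omega_\ve$, in which $f_\ve$ drops out of the tangential equation (your $\omega_\ve$ is the paper's $\nu_\ve$). You test the $\rho$-equation against a cut-off times $(\rho_\ve-1)$, and finish with compactness. But two steps do not go through as written.

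\textbf{The modulus bound.} A Moser argument on $1-|d^\ve|^2$ cannot give $|d^\ve|\ge\frac12$.
\begin{itemize}
\item The sign $F'(r)(r-1)\ge0$ only makes $(|d^\ve|^2-1)_+$ a subsolution (up to $\tau^\ve\cdot d^\ve\in L^2$). That could at best bound $|d^\ve|$ from above.
\item On $\{|d^\ve|<1\}$ the source $|\nabla d^\ve|^2$ has the wrong sign and is only in $L^1$, which is critical in two dimensions.
\end{itemize}
The Bethuel--Brezis--H\'elein alternative likewise needs an $\ve$-scale gradient bound, so everything rests on your final scaling argument. That argument is the paper's, but there you invoke ``interior $W^{2,2}$ estimates on $\ve$-balls'' as if $\Delta\hat d^\ve=f_1(\hat d^\ve)+\hat\tau^\ve$ had $L^2$ data. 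It does not a priori. The term $f_1(\hat d)=F'(|\hat d|^2)\hat d$ blows up as $|\hat d|\to2$, and the energy only gives $\int_{B_1}F(|\hat d^\ve|^2)\dx\le2\delta_0^2$. This does not control $F'$ in $L^2$: for $F(r)=(r-1)^2/(4-r)$, $F\sim9(4-r)^{-1}$ but $F'\sim9(4-r)^{-2}$. This is exactly where the singular potential must be handled.

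The paper squares the rescaled equation, multiplies by $\phi^2$, and integrates the cross term by parts. Convexity gives $\nabla\hat d:\nabla^2\mathcal{F}_1(\hat d)\nabla\hat d=F'|\nabla\hat d|^2+2F''|(\nabla\hat d)^\top\hat d|^2\ge F'(0)|\nabla\hat d|^2$, hence
\begin{equation*}
\int_{B_1}\phi^2\Big(|\Delta\hat d^\ve|^2+\tfrac12|f_1(\hat d^\ve)|^2\Big)\dx\le\int_{B_1}\phi^2|\hat\tau^\ve|^2\dx+C\int_{B_1}|\nabla\hat d^\ve|^2\dx .
\end{equation*}
The right side is bounded because the Dirichlet energy is scale invariant in 2D and $\|\hat\tau^\ve\|_{L^2(B_1)}\le\ve\Lambda_2$. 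Alternatively, you could first prove $|d^\ve|\le\frac32$ by the local maximum principle for $(|d^\ve|^2-1)_+$, after which $f_1$ is bounded on the rescaled balls. Your Step 1, however, aims the Moser argument at the lower bound, where it fails.

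\textbf{The $H^2$ estimate for $\omega^\ve$.} The tangential equation is $\Delta\omega+|\nabla\omega|^2\omega=-2\rho^{-1}\nabla\rho\cdot\nabla\omega+\rho^{-1}\tau''$. Your Ladyzhenskaya absorption handles $|\nabla\omega|^2\omega$ in $L^2$, but not the cross term. Having $\nabla\rho^\ve\cdot\nabla\omega^\ve\in L^2$ needs $\nabla\rho^\ve\in L^4$ (or $\nabla\omega^\ve\in L^\infty$). The only uniform information is $\|\nabla\rho^\ve\|_{L^2}\le\|\nabla d^\ve\|_{L^2}$. An $L^4$ bound on $\nabla\rho^\ve$ would require second-derivative control of $\rho^\ve$, i.e.\ an $L^2$ bound on $\ve^{-2}F'(\rho_\ve^2)\rho_\ve$, which you have not supplied.

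The paper sidesteps this by working in $W^{2,4/3}$:
\begin{itemize}
\item H\"older gives $\|\nabla\rho\cdot\nabla\nu\|_{L^{4/3}}\le\|\nabla\rho\|_{L^2}\|\nabla\nu\|_{L^4}$.
\item The 2D embedding $W^{1,4/3}\hookrightarrow L^4$ gives $\|\nabla(\nu_\ve\eta^2)\|_{L^4}\lesssim\|\nabla^2(\nu_\ve\eta^2)\|_{L^{4/3}}+1$.
\item Smallness of $\|\nabla d^\ve\|_{L^2}$ then absorbs both quadratic terms.
\end{itemize}
$W^{2,4/3}$ still yields the $L^4$ bound on $\nabla\nu_\ve$ needed when testing the $\rho$-equation, and compactness in $H^1$.

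Alternatively, once $|d^\ve|\ge\frac12$ you can square the full equation as Chen--Struwe do. The bound $\ve^{-2}F'(|d^\ve|^2)|\nabla d^\ve|^2\ge-2|f_\ve(d^\ve)||\nabla d^\ve|^2$ yields
$\int\phi^2(|\nabla^2d^\ve|^2+|f_\ve(d^\ve)|^2)\lesssim\int\phi^2(|\tau^\ve|^2+|\nabla d^\ve|^4)+\int|\nabla d^\ve|^2|\nabla\phi|^2$.
Your Ladyzhenskaya absorption then closes for $d^\ve$ itself, with no decomposition needed. With either repair, your Step 3 goes through.
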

 We remark that this lemma is a small-energy
  regularity result in the harmonic map and Ginzburg--Landau literature
  (see, e.g., \cite{chen1989existence,lin2008analysis}), and it is essential for
  the partial regularity theory, especially for controlling the size of the set of singularities. Our proof is
  similar in spirit to results for the simplified Ericksen--Leslie system
  \cite{du2022weak,kortum2020concentration,lin2016global}, but the singular
  potential makes the proof substantially more delicate. In particular,
  $\mathcal F_\ve(d)$ is not convex in $d$ because the minimizing set
  $\mathbb{S}^{2}$ is not convex, and the fact that $F$ blows up outside
  $|d|<2$ further complicates control of the potential term.
  \begin{proof}
    Fix $x_{1}\in B_{r_0/2}(x_{0})$. For $0<\ve<r_{0}/2$, we define $\hat{d}^{\ve}
    (x):=d^{\ve}(x_{1}+\ve x):B_{1}(0)\to \R^{3}$. Then we have
    \begin{equation}
      \label{eqn:hatdeq}\Delta \hat{d}^{\ve}=f_1(\hat{d}^{\ve})+\hat{\tau}^{\ve}\text{
      in }B_{1}(0),
    \end{equation}
    where $\hat{\tau}^{\ve}(x)=\ve^{2}\tau^{\ve}(x_{1}+\ve x)$. Let $\phi\in
    C_{0}^{\infty}(B_{1}(0))$ be a cut-off function such that $\phi=1$ on $B_{\frac{1}{2}}
    (0)$. Squaring both sides of \eqref{eqn:hatdeq}, multiplying by $\phi^2$, and integrating
    by parts, we obtain
    \begin{equation*}
      \begin{aligned}
        \int_{B_1(0)}|\hat{\tau}^{\ve}|^{2}\phi^{2}\dx & =\int_{B_1(0)}|\Delta\hat{d}^{\ve}-f_1(\hat{d}^{\ve})|^{2}\phi^{2}\dx \\
& =\int_{B_1(0)}\left(|\Delta \hat{d}^{\ve}|^{2}+|f_1(\hat{d}^{\ve})|^{2}\right)\phi^{2}+2\nabla \hat{d}^{\ve}:\nabla(f_1(\hat{d}^{\ve})\phi^{2})\dx  \\
& =\int_{B_1(0)}\left(|\Delta \hat{d}^{\ve}|^{2}+|f_1(\hat{d}^{\ve})|^{2}\right)\phi^{2} \\
& \quad +2\nabla\hat{d}^{\ve}:\nabla^{2}_{dd}\mathcal{F}_1(\hat{d}^{\ve})\nabla \hat{d}^{\ve}\phi^{2}+4\nabla\hat{d}^{\ve}:f_1(\hat{d}^{\ve})\otimes\phi\nabla\phi\dx \\
& \ge \int_{B_1(0)}\left(|\Delta \hat{d}^{\ve}|^{2}+|f_1(\hat{d}^{\ve})|^{2}\right)\phi^{2}-C_0|\nabla \hat{d}^\ve|^2 \phi^2 \\
& \quad -\frac{1}{2}|f_1(\hat{d}^{\ve})|^{2}\phi^{2}-8|\nabla\hat{d}^{\ve}|^{2}|\nabla\phi|^{2}\dx,
      \end{aligned}
    \end{equation*}
    where we utilize the structure of $\mathcal{F}_1$ and Young's inequality
    $4ab\ge -\frac{1}{2}a^{2}-8b^{2}$. More precisely, by the definition of the potential $\mathcal{F}_1(d) = \frac{1}{2}F(|d|^2)$, its Hessian is given by $$\nabla^2_{dd} \mathcal{F}_1(d) = F'(|d|^2) I + 2 F''(|d|^2) (d \otimes d).$$ For the double contraction with the gradient matrix $\nabla \hat{d}^{\epsilon}$, this yields
\begin{equation*}
  \nabla \hat{d}^{\epsilon} : \nabla^{2}_{dd}\mathcal{F}_1(\hat{d}^{\epsilon})\nabla \hat{d}^{\epsilon} = F'(|\hat{d}^{\epsilon}|^2) |\nabla \hat{d}^{\epsilon}|^2 + 2 F''(|\hat{d}^{\epsilon}|^2) | (\nabla \hat{d}^{\epsilon} )\hat{d}^{\epsilon}|^2.
\end{equation*}
By Property (F1), $F$ is convex, meaning $F'' \ge 0$, which ensures the second term is non-negative. Furthermore, the convexity of $F$ guarantees that $F'$ is monotonically non-decreasing. Thus, for any $r \in [0, 4)$, $F'(r) \ge F'(0)$. Setting $C_0 = -F'(0) > 0$, we obtain the structural lower bound
\begin{equation*}
  \nabla \hat{d}^{\epsilon} : \nabla^{2}_{dd}\mathcal{F}_1(\hat{d}^{\epsilon})\nabla \hat{d}^{\epsilon} \ge -C_0 |\nabla \hat{d}^{\epsilon}|^2.
\end{equation*}
     Thus $d^{\ve}\in H^{2}(B_{\frac{1}{2}}(0))$
    and $\|\widehat{d}^{\ve}\|_{H^{2}(B_{\frac{1}{2}}(0))}\leq C(\Lambda_1+\Lambda_{2})$.
    Hence by the Sobolev embedding theorem we have that
    $\widehat{d}^{\ve}\in C^{\frac{1}{2}}(B_{\frac{1}{2}}(0))$ and
    \begin{equation*}
      [\hat{d}^{\ve}]_{C^{\frac{1}{2}}(B_{\frac{1}{2}}(0))}\leq C\left\|\widehat{d}
      ^{\ve}\right\|_{H^{2}(B_{\frac{1}{2}}(0))}\leq C\left(\Lambda_1+\Lambda_{2}\right).
    \end{equation*}
    By rescaling, we get
    \begin{equation*}
      \left|d^{\ve}(x)-d^{\ve}(y)\right| \leq C\left(\Lambda_1+\Lambda_{2}\right)\left(\frac{|x-y|}{\ve}
      \right)^{\frac{1}{2}}, \quad \forall x, y \in B_{\ve/2}\left(x_{1}\right)
    \end{equation*}

    We claim that
    $\operatorname{dist}(d^{\ve}, \mathbb{S}^{2}) \leq \frac{1}{2}$ on
    $B_{\frac{r_{0}}{2}}\left(x_{0}\right)$. Suppose it were false. Then there
    exists $x_{1}\in B_{\frac{r_{0}}{2}}\left(x_{0}\right)$ such that
    $\operatorname{dist}\left(d^{\ve}\left(x_{1}\right), \mathbb{S}^{2}\right)>\frac{1}{2}$.
    Then for any $\theta_{0}\in(0,1)$ and $x \in B_{\theta_{0} \ve}\left(x_{1}\right
    )$, it holds
    \begin{equation*}
      \left|d^{\ve}(x)-d^{\ve}\left(x_{1}\right)\right| \leq C\left(\frac{\left|x-x_{1}\right|}{\ve}
      \right)^{\frac{1}{2}}\leq C \theta_{0}^{\frac{1}{2}}\leq \frac{1}{4}
    \end{equation*}
    provided $\theta_{0}\leq \frac{1}{16 C^{2}}$. It follows that
    \begin{equation*}
      \operatorname{dist}\left(d^{\ve}(x), \mathbb{S}^{2}\right) \geq \frac{1}{4}
      ,\quad \forall x \in B_{\theta_{0} \ve}\left(x_{1}\right),
    \end{equation*}
    so that
    \begin{equation*}
      \int_{B_{\theta_{0} \ve}\left(x_{1}\right)}\frac{1}{\ve^{2}}F(|d^\ve|^2)\rd x
      \geq C\pi \theta_{0}^{2}.
    \end{equation*}
    This contradicts the assumption that
    \begin{equation*}
      \int_{B_{\theta_{0} \ve}\left(x_{1}\right)}\frac{1}{\ve^{2}}F(|d^\ve|^2)\rd x
      \leq \int_{B_{r_{1}}(0)}\left(\frac{1}{2}\left|\nabla d^{\ve}\right|^{2}+\mathcal{F}_{\ve}
      (d^{\ve})\right)\dx \leq \delta_{0}^{2},
    \end{equation*}
    if we choose a sufficiently small $\delta_{0}>0$.

    The strategy now is to decompose $d^{\ve}$ into components tangential and normal
    to $\mathbb{S}^2$, derive a coupled system for the two components, and estimate
    each part separately using elliptic regularity.

    \textit{Step 1: Decomposition.}
    Since $\operatorname{dist}(d^{\ve},\mathbb{S}^{2})\le\frac{1}{2}$
    in $B_{r_0/2}(x_0)$, the field $d^{\ve}$ lies in the tubular neighborhood
    $\{d\in\mathbb{R}^3:\operatorname{dist}(d,\mathbb{S}^2)<1\}$, within which
    the nearest-point projection $d\mapsto d/|d|$ onto $\mathbb{S}^2$ is smooth.
    We therefore write
    \begin{equation*}
      d^{\ve}:=\rho_{\ve}\nu_{\ve},
    \end{equation*}
    where $\nu_{\ve}=\frac{d^{\ve}}{|d^{\ve}|}$ is the projection of $d^{\ve}$
    onto $\mathbb{S}^{2}$, $\rho_{\ve}=|d^\ve|$ is the magnitude of $d^\ve$.

    \textit{Step 2: Rewriting the equation.}
    Applying the Laplacian to $d^{\ve}=\nu_{\ve}\rho_{\ve}$ and
    expanding $$\Delta(\nu_{\ve}\rho_{\ve})=\rho_{\ve}\Delta\nu_{\ve}+2
    \nabla\rho_{\ve}\cdot\nabla\nu_{\ve}+\nu_{\ve}\Delta\rho_{\ve}$$ by the Leibniz rule,
    then substituting into \eqref{eqn:GLtensor} and expressing the nonlinear term
    $f_{\ve}(d^{\ve})$ in terms of $\rho_{\ve}$,
    we obtain
    \begin{equation}
     \rho_{\ve} \Delta \nu_{\ve}+\Delta \rho_{\ve}\nu_{\ve}+2 \nabla \rho_{\ve}\cdot \nabla \nu
      _{\ve} -\frac{1}{\ve^{2}}F'(\rho_{\ve}^{2}) \rho_\ve \nu_\ve=\tau^{\ve}.\label{eqn:decompeq}
    \end{equation}

    \textit{Step 3: Equation for the magnitude component $\rho_{\ve}$.}
    To extract an equation for $\Delta\rho_{\ve}$, we take the inner product of
    \eqref{eqn:decompeq} with $\nu_{\ve}$. We use the following identities, all
    following from $|\nu_{\ve}|=1$:
    \begin{itemize}
      \item Differentiating $|\nu_{\ve}|^2=1$ gives $( 2\nabla\rho_{\ve}\cdot
        \nabla\nu_{\ve})\cdot\nu_{\ve}=0$.
      \item Applying the Laplacian to $|\nu_{\ve}|^2=1$ gives
        $\nu_{\ve}\cdot\Delta\nu_{\ve}=-|\nabla\nu_{\ve}|^2$.
    \end{itemize}
    Using also the identity $\nu_{\ve}\rho_{\ve}\cdot\nu_{\ve}
    =\rho_{\ve}$, we obtain
    \begin{equation}
      \Delta \rho_{\ve}=
      \rho_{\ve}\left|\nabla \nu_{\ve}\right|^{2}+\frac{1}{\ve^{2}}F^{\prime}\left
      (\rho_{\ve}^{2}\right) \rho_{\ve}+\tau_{\ve}^{\perp},\label{eqn:rhove}
    \end{equation}
    where $\tau_{\ve}^{\perp}=\tau_{\ve}\cdot \nu_{\ve}$.

    \textit{Step 4: Equation for the orientational component $\nu_{\ve}$.}
    Substituting the equation \eqref{eqn:rhove} for $\Delta\rho_{\ve}$ back into \eqref{eqn:decompeq},
    the singular $\ve^{-2}$ terms cancel,  \begin{equation}\label{eqn:nuev}
     \rho_{\ve}\left(\Delta \nu_{\ve}+\left|\nabla \nu_{\ve}\right
      |^{2}\nu_{\ve}\right)+2\nabla \rho_{\ve}\cdot\nabla \nu_{\ve}=\tau_{\ve}^{\prime \prime},
    \end{equation}
    where $\tau_{\ve}^{\prime \prime}=\tau_{\ve}-\tau_{\ve}^{\perp}\nu_{\ve}$ is the
    component of $\tau_{\ve}$ tangential to $\nu_{\ve}$. Note that this equation
    contains no $\ve^{-2}$ factor, which is essential for obtaining $\ve$-uniform
    estimates on $\rho_{\ve}$.

    \textit{Step 5: $W^{2,\frac{4}{3}}$-estimates for  $\nu_{\ve}$.}
    Let $\eta \in C_{0}^{\infty}(B_{\frac{r_{0}}{2}}\left(x_{0}\right))$ be a
    standard cutoff function satisfying $\eta\equiv 1$ on
    $B_{\frac{3r_{0}}{8}}\left(x_{0}\right)$ and $0\le\eta\le 1$. We
    localize by multiplying by $\eta^2$ and apply the $W^{2,\frac{4}{3}}$-estimate
    separately to $\nu_{\ve}$.

    We compute $\Delta(\nu_{\ve}\eta^{2})$
    by multiplying the equation \eqref{eqn:nuev} for $\nu_{\ve}$ by $\eta^{2}$ and applying the
    Leibniz rule, obtaining
 \begin{equation}\label{eqn:Deltanuv-eta2}
  \begin{aligned}
    \Delta\!\left(\nu_{\ve}\eta^{2}\right)
    &= \nu_{\ve}\,\Delta(\eta^{2})+2\nabla(\eta^{2})\cdot \nabla \nu_\ve \\
    &\quad -\eta^{2}|\nabla\nu_{\ve}|^{2}\nu_{\ve}
       -\frac{2\eta^{2}}{\rho_{\ve}}\nabla\rho_{\ve}\cdot\nabla\nu_{\ve}
       +\frac{\eta^{2}}{\rho_{\ve}}\,\tau_{\ve}^{\prime\prime}.
  \end{aligned}
\end{equation}
  By the Calder\'on--Zygmund estimate $\|\nabla^{2}u\|_{L^{4/3}}\lesssim
\|\Delta u\|_{L^{4/3}}$ applied to $u=\nu_{\ve}\eta^{2}$ together with
\eqref{eqn:Deltanuv-eta2}, H\"older's inequality, and the bounds
$\rho_{\ve}\ge\tfrac12$, $|\nu_{\ve}|=1$, we obtain
\begin{equation*}%\label{eqn:nuv-W24/3}
  \begin{aligned}
    \bigl\|\nabla^{2}(\nu_{\ve}\eta^{2})\bigr\|_{L^{4/3}}
    &\lesssim \bigl\|\nu_{\ve}\Delta(\eta^{2})\bigr\|_{L^{4/3}}
       +\bigl\|\nabla\nu_{\ve}\cdot\nabla(\eta^{2})\bigr\|_{L^{4/3}} \\
    &\quad +\bigl\|\eta^{2}|\nabla\nu_{\ve}|^{2}\nu_{\ve}\bigr\|_{L^{4/3}}
       +\Bigl\|\tfrac{2\eta^{2}}{\rho_{\ve}}\nabla\rho_{\ve}\cdot\nabla\nu_{\ve}\Bigr\|_{L^{4/3}}
       +\Bigl\|\tfrac{\eta^{2}}{\rho_{\ve}}\tau_{\ve}^{\prime\prime}\Bigr\|_{L^{4/3}} \\
    &\lesssim 1+\|\nabla\nu_{\ve}\|_{L^{4/3}}
       +\|\nabla\nu_{\ve}\|_{L^{2}}\,\|\nabla(\nu_{\ve}\eta^{2})\|_{L^{4}} \\
    &\quad +\|\nabla(\nu_{\ve}\eta^2)\|_{L^{4}}\,\|\nabla\rho_{\ve}\|_{L^{2}}
       +\|\tau_{\ve}\|_{L^{2}} \\
    &\lesssim 1+\|\nabla d^{\ve}\|_{L^{2}}\Bigl[\,1
       +\|\nabla(\nu_{\ve}\eta^{2})\|_{L^{4}}
       \Bigr]
       +\|\tau_{\ve}\|_{L^{2}},
  \end{aligned}
\end{equation*}
where in the last line we used that $\nu_{\ve}=d^{\ve}/|d^{\ve}|$ and
$\rho_{\ve}=|d^{\ve}|$ are Lipschitz functions of $d^{\ve}$ on the
tubular neighborhood (see \cite[2.12.3]{Simon2012Theorems}), hence
$\|\nabla\nu_{\ve}\|_{L^{p}},\|\nabla\rho_{\ve}\|_{L^{p}}\lesssim
\|\nabla d^{\ve}\|_{L^{p}}$ for any $p\in[1,\infty]$. Now by the Sobolev inequality $\|\nabla(\nu_\ve \eta^2)\|_{L^4}\lesssim \|\nabla^2(\nu_\ve\eta^2)\|_{L^{4/3}}+1$ and the smallness assumption for $\|\nabla d^\ve\|_{L^2}$, we obtain the uniform estimate 
\begin{equation}\label{eqn:uniform_nuve_Est}
	\|\nabla \nu_\ve\|_{W^{2, 4/3}}\lesssim \|\tau_\ve\|_{L^2}+1.
\end{equation}
      \textit{Step 6: $H^1$ convergence of $\rho_\ve$.}
    We first show that $\rho_\ve\to 1$ in $L^2(B_{3r_0/8}(x_0))$.
    Throughout this step, all balls are centered at $x_0$.
    Recall that $\frac12\le\rho_\ve\le\frac32$ in $B_{r_0/2}$.
    For any $\delta\in(0,\frac12)$, Property (F2) and the continuity
    of $F$ imply that
    \begin{equation*}
      m_\delta:=
      \min_{\substack{s\in[1/2,3/2]\\|s-1|\ge\delta}}F(s^2)>0.
    \end{equation*}
    Hence, by the energy bound,
    \begin{equation*}
      \left|\left\{x\in B_{3r_0/8}:
      |\rho_\ve(x)-1|\ge\delta\right\}\right|
      \le \frac{1}{m_\delta}\int_{B_{3r_0/8}}
      F(\rho_\ve^2)\dx
      \le \frac{2\Lambda_1\ve^2}{m_\delta}.
    \end{equation*}
    Since $|\rho_\ve-1|\le\frac12$, it follows that
    \begin{equation*}
      \|\rho_\ve-1\|_{L^2(B_{3r_0/8})}^2
      \le \delta^2|B_{3r_0/8}|
      +\frac{\Lambda_1\ve^2}{2m_\delta}.
    \end{equation*}
    Sending first $\ve\to0$ and then $\delta\to0$, we obtain
    \begin{equation*}
      \rho_\ve\to1
      \quad\text{in }L^2(B_{3r_0/8}).
    \end{equation*}

    To prove the convergence of the gradients, choose
    $\eta\in C_0^\infty(B_{3r_0/8})$ such that
    $0\le\eta\le1$ and $\eta\equiv1$ on $B_{r_0/4}$.
    Multiplying \eqref{eqn:rhove} by
    $\eta^2(\rho_\ve-1)$ and integrating by parts, we obtain
    \begin{equation*}
      \begin{aligned}
        &\int_{B_{3r_0/8}}\eta^2|\nabla\rho_\ve|^2\dx
        +\frac{1}{\ve^2}\int_{B_{3r_0/8}}
        \eta^2F'(\rho_\ve^2)\rho_\ve(\rho_\ve-1)\dx\\
        &=-2\int_{B_{3r_0/8}}
        \eta(\rho_\ve-1)\nabla\eta\cdot\nabla\rho_\ve\dx\\
        &\quad-\int_{B_{3r_0/8}}
        \eta^2\rho_\ve(\rho_\ve-1)|\nabla\nu_\ve|^2\dx
        -\int_{B_{3r_0/8}}
        \eta^2\tau_\ve^\perp(\rho_\ve-1)\dx.
      \end{aligned}
    \end{equation*}
    By the convexity of $F$ and Property (F2),
    $F'(s)(s-1)\ge0$ for $s\in[0,4)$, and therefore
    \begin{equation*}
      F'(\rho_\ve^2)\rho_\ve(\rho_\ve-1)\ge0.
    \end{equation*}
    We may thus drop the potential term on the left-hand side.
    Applying H\"older's inequality and using
    $\rho_\ve\le\frac32$, we find
    \begin{equation*}
      \begin{aligned}
        \int_{B_{r_0/4}}|\nabla\rho_\ve|^2\dx
        &\lesssim
        \|\rho_\ve-1\|_{L^2(B_{3r_0/8})}
        \Big(
        \|\nabla\rho_\ve\|_{L^2(B_{3r_0/8})}\\
        &\qquad\qquad
        +\|\nabla\nu_\ve\|_{L^4(B_{3r_0/8})}^2
        +\|\tau_\ve\|_{L^2(B_{3r_0/8})}
        \Big).
      \end{aligned}
    \end{equation*}
    Here the implicit constant may depend on $r_0$, but is
    independent of $\ve$. The terms in parentheses are uniformly
    bounded: the first follows from
    $|\nabla\rho_\ve|\le|\nabla d^\ve|$ and
    \eqref{eqn:Lambda1}, the second from the local
    $W^{2,4/3}$-estimate for $\nu_\ve$ in Step 5 and the
    Sobolev embedding $W^{1,4/3}\hookrightarrow L^4$ in
    dimension two, and the third from \eqref{eqn:Lambda2}.
    Consequently,
    \begin{equation*}
      \int_{B_{r_0/4}}|\nabla\rho_\ve|^2\dx
      \lesssim\|\rho_\ve-1\|_{L^2(B_{3r_0/8})}
      \to0.
    \end{equation*}
    Together with the $L^2$ convergence established above,
    this yields
    \begin{equation*}
      \rho_\ve\to1
      \quad\text{strongly in }H^1(B_{r_0/4}).
    \end{equation*}

    \textit{Step 7: $H^1$ convergence of $d_\ve$.} It follows from the uniform
    $W^{2,4/3}$-estimate for $\nu_{\ve}$ \eqref{eqn:uniform_nuve_Est} and the
    Sobolev compact embedding $W^{2,4/3} \hookrightarrow H^1$ that there exists a $\nu\in H^1(B_{r_0/4} , \mathbb{S}^2)$ such that $\nu_\ve\to
    \nu$ in $H^1(B_{r_0/4})$. Since $\rho_{\ve}\to 1$ in $H^1(B_{r_0/4})$, we conclude that $d^{\ve}=\nu_{\ve}\rho_{\ve}\to \nu=:d$ in $H^1(B_{r_0/4})$. This completes the proof.
  \end{proof}

  Now we define the concentration set by
  \begin{equation*}
    \Sigma:=\bigcap_{r>0}\left\{x \in \T^{2}: \liminf_{\ve \rightarrow 0}\int_{B_{r}(x)}
    \left(\frac{1}{2}\left|\nabla d^{\ve}\right|^{2}+\mathcal{F}_{\ve}(d^{\ve})\right)\dx>\delta
    _{0}^{2}\right\},
  \end{equation*}
  where $\delta_{0}>0$ is given in Lemma \ref{lemma:compactness}.

  \begin{lemma}
    $\Sigma$ is a finite set and $|\Sigma|\le \Lambda_{1}/\delta_{0}^{2}$.
  \end{lemma}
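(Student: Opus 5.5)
The plan is a standard Vitali-type covering argument. It uses only the uniform bound \eqref{eqn:Lambda1} and the definition of $\Sigma$ as a set where the liminf of the local energy exceeds $\delta_0^2$ at every scale. Write $e_\ve:=\frac12|\nabla d^\ve|^2+\mathcal{F}_\ve(d^\ve)$. It suffices to show that every finite subset $\{x_1,\dots,x_M\}\subseteq\Sigma$ satisfies $M\le\Lambda_1/\delta_0^2$. This bounds the cardinality of $\Sigma$, and in particular shows that $\Sigma$ is finite.

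First, fix such distinct points $x_1,\dots,x_M\in\Sigma$. Choose $r>0$ so small that the balls $B_r(x_i)$, $i=1,\dots,M$, are pairwise disjoint in $\T^2$; for instance, take $r$ less than half the minimal mutual distance and less than the injectivity radius of the torus. Since each $x_i\in\Sigma$, the definition of $\Sigma$ at this particular radius gives
\begin{equation*}
\liminf_{\ve\to0}\int_{B_r(x_i)}e_\ve\dx>\delta_0^2,\qquad i=1,\dots,M.
\end{equation*}

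Second, sum over $i$ and use superadditivity of $\liminf$ together with disjointness and nonnegativity of $e_\ve$. The latter holds because $\mathcal{F}_\ve\ge0$ by (F\ref{prop:F_min}). This yields
\begin{equation*}
M\delta_0^2<\sum_{i=1}^M\liminf_{\ve\to0}\int_{B_r(x_i)}e_\ve\dx\le\liminf_{\ve\to0}\sum_{i=1}^M\int_{B_r(x_i)}e_\ve\dx\le\liminf_{\ve\to0}\int_{\T^2}e_\ve\dx\le\Lambda_1,
\end{equation*}
where the last step is \eqref{eqn:Lambda1}. Hence $M<\Lambda_1/\delta_0^2$. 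Since this holds for every finite subset of $\Sigma$, $\Sigma$ is finite with $|\Sigma|\le\Lambda_1/\delta_0^2$.

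There is no serious obstacle. The only points needing care are these: the bound must be applied along a fixed sequence (or subsequence) $\ve\to0$, so that the $\liminf$ is taken consistently and the subsequence selected before Lemma \ref{lemma:compactness} is used; disjointness of small balls on the torus; and nonnegativity of $\mathcal{F}_\ve$, which is needed to drop the energy outside the balls.
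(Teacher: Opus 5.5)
Your proof is correct and is essentially the paper's covering argument: disjoint small balls around points of $\Sigma$, summing the local energies, and comparing the sum with $\Lambda_1$. The only cosmetic difference is that the paper fixes a single $\ve$ at which all $N$ local energies exceed $\delta_0^2$, whereas you use superadditivity of $\liminf$; the two steps are equivalent.
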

  \begin{proof}
    We prove it by the following covering argument. Suppose
    $|\Sigma|\ge N>\Lambda_{1}/\delta_{0}^{2}$, then we can find $r>0$ and $\{x_{i}
    \}_{i=1}^{N}\subset \T^{2}$ such that $B_{r}(x_{i})\cap B_{r}(x_{j})=\emptyset$
    for $i\neq j$. Furthermore, we can find an $\ve\in(0, 1)$ such that for $i=1,
    2, \cdots, N$
    \begin{equation*}
      \int_{B_r(x_i)}\left(\frac{1}{2}|\nabla d^{\ve}|^{2}+\mathcal{F}_{\ve}(d^{\ve})\right
      )\rd x>\delta_{0}^{2}.
    \end{equation*}
    Then
    \begin{equation*}
      \begin{aligned}
        N\delta_{0}^{2} & <\sum_{i=1}^{N}\int_{B_r(x_i)}\left(\frac{1}{2}|{\nabla}d^{\ve}|^{2}+\mathcal{F}_{\ve}(d^{\ve})\right)\rd x \\
       & =\int_{\bigcup_iB_r(x_i)}\left(\frac{1}{2}|{\nabla}d^{\ve}|^{2}+\mathcal{F}_{\ve}(d^{\ve})\right)\rd x      \\
       & \le \int_{\T^2}\left(\frac{1}{2}|{\nabla}d^{\ve}|^{2}+\mathcal{F}_{\ve}(d^{\ve})\right)\rd x                \\
                        & \le \Lambda_{1}.
      \end{aligned}
    \end{equation*}
    This contradicts $N>\Lambda_{1}/\delta_{0}^{2}$.
  \end{proof}

  \subsection{Convergence on energy concentration sets}\label{3.2}

  In this section we aim to show the weak convergence of Ericksen stress tensors,
  i.e.,
  \begin{equation}\label{eqn:EricksenConv}
    \begin{split}
      \lim_{\epsilon\to 0}\int_{\T^2\times\left\{ t \right\}}&(\nabla d^{\ve}\odot \nabla
      d^{\ve}):\nabla z\dx =\int_{\T^2\times\left\{ t \right\}}(\nabla d\odot \nabla d)
      :\nabla z\dx.
    \end{split}
  \end{equation}
 Throughout this subsection we fix a time $t\in A$, where $A\subset[0,T]$ with $|A|=T$ denotes the set of times for which
  \begin{equation*}
    \liminf_{\ve\to0^+}\int_{\T^2}\big(|\nabla v^{\ve}|^{2}+|\od^{\ve}|^{2}+|D(v^{\ve})d^{\ve}|^{2}\big)(\cdot,t)\,\dx<\infty;
  \end{equation*}
  the existence of such a set is seen as follows. The energy estimate (Proposition \ref{prop:EnergyEst}) provides the uniform dissipation bound
  \begin{equation*}
    \sup_{0<\ve<1}\int_{0}^{T}\!\!\int_{\T^2}\big(|\nabla v^{\ve}|^{2}+|\od^{\ve}|^{2}+|D(v^{\ve})d^{\ve}|^{2}\big)\,\dx\rd t\le E_{0}.
  \end{equation*}
  Fix any sequence $\ve_{j}\to0^{+}$ and set
  \begin{equation*}
    g_{j}(t):=\int_{\T^2}\big(|\nabla v^{\ve_j}|^{2}+|\od^{\ve_j}|^{2}+|D(v^{\ve_j})d^{\ve_j}|^{2}\big)(\cdot,t)\,\dx\ \ge0 .
  \end{equation*}
  Since each $g_{j}$ is nonnegative and measurable on $[0,T]$, Fatou's lemma gives
  \begin{equation*}
    \int_{0}^{T}\liminf_{j\to\infty}g_{j}(t)\,\rd t\le \liminf_{j\to\infty}\int_{0}^{T}g_{j}(t)\,\rd t\le E_{0}.
  \end{equation*}
  Hence $t\mapsto\liminf_{j\to\infty}g_{j}(t)$ belongs to $L^{1}(0,T)$ and is, in particular, finite for a.e. $t\in[0,T]$. Since $\liminf_{\ve\to0^{+}}\le\liminf_{j\to\infty}$ along any fixed sequence $\ve_{j}\to0^{+}$, the set $A$ of times $t$ at which the displayed $\liminf$ is finite satisfies $|A|=T$. We first verify that $\{d^{\ve}(t)\}_{0<\ve<1}$ satisfies the hypotheses \eqref{eqn:Lambda1}--\eqref{eqn:Lambda2} of Lemma \ref{lemma:compactness}. To this end, observe that $d^{\ve}(t)$ solves \eqref{eqn:GLtensor} with
  \begin{equation*}
    \tau^{\ve}=\gamma_{1}\od^{\ve}+\gamma_{2}D(v^{\ve})d^{\ve}-\ve_{a}(\nabla\Phi^{\ve}\otimes\nabla \Phi^{\ve})d^{\ve},
  \end{equation*}
  and, along a subsequence realizing the $\liminf$ above, we estimate, using $|d^{\ve}|<2$ and the uniform estimate \eqref{philp} on $\nabla\Phi^{\ve}$,
  \begin{equation}\label{taubound}
    \|\tau^{\ve}(t)\|_{L^2(\T^2)}\lesssim \|\od^{\ve}(t)\|_{L^2}+\|D(v^{\ve})d^{\ve}(t)\|_{L^2}+\|\nabla\Phi^{\ve}(t)\|_{L^4}^{2}\le \Lambda_{2}<\infty,
  \end{equation}
  uniformly in $\ve$, while \eqref{eqn:Lambda1} holds with $\Lambda_{1}=E(0)$ by the energy estimate. Defining the concentration set at time $t$ by
  \begin{equation*}
    \Sigma_{t}:= \bigcap_{r>0}\left\{ \mathbf{x}\in \T^{2} : \liminf_{\ve\to0}\int
    _{B_r(\mathbf{x})\times\{t\}}\left( \frac{1}{2}|\nabla d^{\ve}|^{2}+ \frac{1}{\ve^{2}}
    F(|d^{\ve}|^2) \right) > \delta_{0}^{2}\right\},
  \end{equation*}
  with $\delta_{0}$ as in Lemma \ref{lemma:compactness}, the covering argument of the preceding subsection gives $|\Sigma_{t}|\le \Lambda_{1}/\delta_{0}^{2}$, and Lemma \ref{lemma:compactness} yields, after passing to a further subsequence, 
  \begin{equation*}
    d^{\ve}(t)\to d(t)\quad\text{in } H^{1}_{\rm loc}(\T^{2}\setminus\Sigma_{t}).
  \end{equation*}
 Here $d$ denotes the global limit obtained by the Aubin--Lions lemma in \eqref{eqn:vdconvergence}; after fixing a subsequence converging to $d(t)$ in $L^2(\T^2)$ for a.e.\ $t$ and restricting $A$ accordingly, the local limit is identified with $d(t)$.
  For simplicity, we assume the set of singularities
  $\Sigma_{t}=\{(0,0)\}\subset\T^{2}$ consists of a single point at zero. Let $z\in
  C^{\infty}(\T^{2}, \R^{2})$ be such that $\nabla\cdot z=0$ and $(0,0)\in \mbox{spt}
  (z)$. Then we observe that by adding the null term
  $-\frac{1}{2}|\nabla d^{\ve}|^{2}(\nabla \cdot z)$, we have
  \begin{equation*}
    \begin{aligned}
      &\int_{\T^2\times\left\{ t \right\}}(\nabla d^{\ve}\odot \nabla d^{\ve}):\nabla z \rd x\\
      & =\int_{\T^2\times\{t\}}(\nabla d^{\epsilon}\otimes \nabla d^{\epsilon}):\nabla z-\frac{1}{2}|\nabla d^{\epsilon}|^{2}(\nabla\cdot z)\rd x \\
      & =\int_{\T^2\times\left\{ t \right\}}\big(\nabla d^{\ve}\odot \nabla d^{\ve}-\frac{1}{2}|\nabla d^{\ve}|^{2}{\rm Id}\big):\nabla z \rd x.
    \end{aligned}
  \end{equation*}
  While by direct computations, we have
  \begin{equation*}
    \begin{split}
      \nabla d^{\ve}&\odot\nabla d^{\ve}-\frac{1}{2}|\nabla d^{\ve}|^{2}{\rm Id} \\
      &=\frac{1}{2}\left(
      \begin{array}{ll}
        |\partial_{x_1} d^\ve|^2-|\partial_{x_2} d^\ve|^2, & 
        2\partial_{x_1} d^\ve\cdot\partial_{x_2} d^\ve \\[1.5ex]
        2\partial_{x_1} d^\ve\cdot\partial_{x_2} d^\ve, & 
        |\partial_{x_2} d^\ve|^2-|\partial_{x_1} d^\ve|^2
      \end{array}
      \right).
    \end{split}
  \end{equation*}
  We can assume that there are two real numbers $\alpha, \beta$ such that
  \begin{equation}
    (|\partial_{x_1}d^{\ve}|^{2}-|\partial_{x_2}d^{\ve}|^{2})\dx\overset{\ast}{\rightharpoonup}(|\partial_{x_1}
    d|^{2}-|\partial_{x_2}d|^{2})\dx+\alpha \delta_{(0, 0)}, \label{convalpha}
  \end{equation}
  \begin{equation}
     (\partial_{x_1}d^{\ve}\cdot \partial_{x_2}d^{\ve}) \dx\overset{\ast}{\rightharpoonup}
     (\partial_{x_1}d\cdot\partial_{x_2}d)  \dx+\beta \delta_{(0, 0)}
    , \label{convbeta}
  \end{equation}
  hold as convergence of Radon measures. The existence of $\alpha,\beta\in\R$ is justified as follows: the measures on the left of \eqref{convalpha}--\eqref{convbeta} have total variation bounded by $\|\nabla d^{\ve}(t)\|_{L^2}^{2}\le 2\Lambda_{1}$, and hence, along a subsequence, converge weakly-$*$ to limit measures. By the strong convergence $d^{\ve}(t)\to d(t)$ in $H^{1}_{\rm loc}(\T^{2}\setminus\{(0,0)\})$, the corresponding defect measures, i.e., the differences between these limits and $(|\partial_{x_1}d|^{2}-|\partial_{x_2}d|^{2})\dx$, $(\partial_{x_1}d\cdot\partial_{x_2}d)\dx$, respectively, are supported in $\{(0,0)\}$, and a finite signed measure supported at a single point is a real multiple of the Dirac mass there. Next we want to show
  \begin{equation}
    \label{vanish}\alpha=\beta=0.
  \end{equation}
  Recall that
  \begin{equation}
    \begin{split}
      \Delta d^{\ve}&-f_{\ve}(d^{\ve})=\tau^{\ve}:=\gamma_{1}\od^{\ve}+\gamma_{2} D(v^{\ve})d^{\ve} -\ve_{a}(\nabla\Phi^{\ve}\otimes\nabla \Phi^{\ve})d^{\ve}.
    \end{split}
    \label{eqn:GinzburgLandauApp}
  \end{equation}
  For any $X\in C_{0}^{\infty}(\T^{2};\R^{2})$, we test the equation \eqref{eqn:GinzburgLandauApp}
  by $X\cdot \nabla d^{\ve}$ over $B_{r}(0)$. In what follows, $e_{\ve}(d^{\ve})$ denotes the Ginzburg--Landau energy density:
  \begin{equation*}
    e_{\ve}(d^{\ve}):=\frac{1}{2}|\nabla d^{\ve}|^{2}+\mathcal{F}_{\ve}(d^{\ve}).
  \end{equation*}
 We claim the pointwise identity
  \begin{equation*}
    \begin{split}
      (X\cdot\nabla d^{\ve})\cdot\big(\Delta d^{\ve}-f_{\ve}(d^{\ve})\big)
      =\ &\nabla\cdot\Big(\big(\nabla d^{\ve}\big)^\top\big(X\cdot \nabla d^{\ve}\big)-X\,e_{\ve}(d^{\ve})\Big) \\
      &+({\rm div}\,X)\,e_{\ve}(d^{\ve})-\nabla X:(\nabla d^{\ve}\odot \nabla d^{\ve}).
    \end{split}
  \end{equation*}
  To verify this, we expand the two divergences on the right-hand side:
  \begin{equation*}
    \begin{split}
      \nabla\cdot\big(\big(\nabla d^{\ve}\big)^\top(X\cdot \nabla d^{\ve})\big)
      &=(X\cdot\nabla d^{\ve})\cdot\Delta d^{\ve}+\nabla X:(\nabla d^{\ve}\odot \nabla d^{\ve})+X\cdot\nabla\big(\tfrac{1}{2}|\nabla d^{\ve}|^{2}\big),\\
      \nabla\cdot\big(X\,e_{\ve}(d^{\ve})\big)
      &=({\rm div}\,X)\,e_{\ve}(d^{\ve})+X\cdot\nabla\big(\tfrac{1}{2}|\nabla d^{\ve}|^{2}\big)+f_{\ve}(d^{\ve})\cdot(X\cdot\nabla d^{\ve}),
    \end{split}
  \end{equation*}
  where we used the chain rules $X\cdot\nabla\big(\mathcal{F}_{\ve}(d^{\ve})\big)=f_{\ve}(d^{\ve})\cdot(X\cdot\nabla d^{\ve})$ and $X\cdot\nabla\big(\tfrac{1}{2}|\nabla d^{\ve}|^{2}\big)=\nabla d^{\ve}:(X\cdot\nabla)\nabla d^{\ve}$, together with the symmetry of $\nabla d^{\ve}\odot\nabla d^{\ve}$. Subtracting the second line from the first, the terms $X\cdot\nabla\big(\tfrac{1}{2}|\nabla d^{\ve}|^{2}\big)$ cancel, and the claim follows. Integrating the pointwise identity over $B_{r}(0)$, using $\Delta d^{\ve}-f_{\ve}(d^{\ve})=\tau^{\ve}$, and applying the divergence theorem, with outward unit normal $\frac{x}{|x|}$ on $\partial B_{r}(0)$, we obtain the following Pohozaev identity
  \begin{equation}
    \label{eqn:pohozaev}
    \begin{aligned}
      & \int_{\pa B_r(0)}(X\cdot \nabla d^{\ve})\cdot\left(\frac{x}{|x|}\cdot \nabla d^{\ve}\right)\rd \sigma(x) \\
      & \quad -\int_{B_r(0)}\nabla X:\nabla d^{\ve}\odot \nabla d^{\ve}\rd{x} \\
      & \quad +\int_{B_r(0)}{\rm div}X e_{\ve}(d^{\ve})\rd{x} -\int_{\pa B_r(0)}e_{\ve}(d^{\ve})(X\cdot \frac{x}{|x|})\rd \sigma(x) \\
      & = \int_{B_r(0)}(X\cdot \nabla d^{\ve})\cdot \tau^{\ve}\rd{x}.
    \end{aligned}
  \end{equation}

  Let $X(x)=x$, then $\nabla X={\rm Id}$ and ${\rm div}\,X=2$, so that $\nabla X:(\nabla d^{\ve}\odot\nabla d^{\ve})=|\nabla d^{\ve}|^{2}$; since $2e_{\ve}(d^{\ve})-|\nabla d^{\ve}|^{2}=2\mathcal{F}_{\ve}(d^{\ve})=\frac{1}{\ve^{2}}F(|d^{\ve}|^{2})$, using the notation $\partial/\partial r$ to denote the directional derivative along $x/|x|$, i.e., $\partial d/\partial r=x/|x|\cdot \nabla d$, the identity \eqref{eqn:pohozaev} becomes
  \begin{equation*}
    \begin{split}
      &r\int_{\pa B_r(0)}\left|\frac{\pa d^{\ve}}{\pa r}\right|^2\rd\sigma(x)
      +\int_{B_r(0)}\frac{1}{\ve^{2}}F(|d^{\ve}|^2)\rd x -r\int_{\pa B_r(0)}e_{\ve} (d^{\ve})\rd\sigma(x)
      =\int_{B_r(0)}|x|\frac{\pa d^{\ve}}{\pa r}\cdot \tau^{\ve}\rd x.
    \end{split}
  \end{equation*}
  Hence,
  \begin{equation*}
    \begin{split}
      &\int_{\partial B_r(0)}e_{\ve}(d^{\ve})\rd\sigma (x) 
      = \int_{\partial B_r(0)}\left| \frac{\partial d^{\ve}}{\partial r}\right|^{2}\rd\sigma(x) + \frac{1}{r}\int_{B_r(0)}\frac{1}{\ve^{2}}F(|d^{\ve}|^2)\rd x + \mathcal{R}_{\ve}(r).
    \end{split}
  \end{equation*}
Here the remainder
  \begin{equation*}
    \mathcal{R}_{\ve}(r):=-\frac{1}{r}\int_{B_r(0)}|x|\,\frac{\pa d^{\ve}}{\pa r}\cdot \tau^{\ve}\rd x
    \quad\text{satisfies}\quad
    |\mathcal{R}_{\ve}(r)|\le \int_{B_r(0)}|\nabla d^{\ve}|\,|\tau^{\ve}|\rd x\le C_{0},
  \end{equation*}
  where we used $|x|\le r$ on $B_{r}(0)$, Cauchy--Schwarz, the energy estimate, and \eqref{taubound}; the constant $C_{0}$ is independent of $r$ and $\ve$.
  After integrating from $r$ to $R$, this yields
  \begin{equation}
    \label{eq:3.26}
    \begin{split}
      \int_{B_R(0)\setminus B_r(0)}e_{\ve}(d^{\ve}) \dx
      & = \int_{B_R(0)\setminus B_r(0)}\left| \frac{\partial d^{\ve}}{\partial r}\right|^{2}  \dx+ \int_{r}^{R}\frac{1}{\rho}\int_{B_\rho(0)}\frac{1}{\ve^{2}}F(|d^{\ve}|^2)\rd x\, \rd\rho \\
      & \quad + \int_{r}^{R}\mathcal{R}_{\ve}(\rho)\, \rd\rho.
    \end{split}
  \end{equation}
  Since $\Sigma_{t}=\{(0,0)\}$, we fix a radius $r_{*}>0$ and we may assume that there exists $\gamma\ge0$ such
  that
  \begin{equation*}
    e_{\ve}(d^{\ve})\,\dx\overset{\ast}\rightharpoonup \frac{1}{2}|\nabla
    d|^{2}\,\dx+ \gamma \delta_{(0,0)}\quad \text{in }B_{r_{*}}( 0),
  \end{equation*}
  as convergence of Radon measures; the existence of $\gamma\ge0$ follows as for $\alpha,\beta$ above, using in addition that $e_{\ve}(d^{\ve})\,\dx\ge0$, which forces the defect to be a \emph{nonnegative} multiple of $\delta_{(0,0)}$.
  We now pass to the limit $\ve\to0$ in \eqref{eq:3.26} term by term. Since $(0,0)\notin\overline{B_R(0)\setminus B_r(0)}$ and the limiting measure of $e_{\ve}(d^{\ve})\,\dx$ assigns no mass to $\partial B_r(0)\cup\partial B_R(0)$ (its singular part being concentrated at the origin), we have, for all $0<r<R<r_{*}$,
  \begin{equation*}
    \lim_{\ve\to0}\int_{B_R(0)\setminus B_r(0)}e_{\ve}(d^{\ve})\dx=\int_{B_R(0)\setminus B_r(0)}\frac{1}{2}|\nabla d|^{2}\dx.
  \end{equation*}
  For the first term on the right-hand side of \eqref{eq:3.26}, the strong convergence $d^{\ve}(t)\to d(t)$ in $H^{1}_{\rm loc}(\T^{2}\setminus\{(0,0)\})$ gives
  \begin{equation*}
    \lim_{\ve\to0}\int_{B_R(0)\setminus B_r(0)}\left|\frac{\partial d^{\ve}}{\partial r}\right|^{2}\dx=\int_{B_R(0)\setminus B_r(0)}\left|\frac{\partial d}{\partial r}\right|^{2}\dx;
  \end{equation*}
  for the second term, we apply Fatou's lemma in $\rho$; and for the third term, the uniform bound $|\mathcal{R}_{\ve}(\rho)|\le C_{0}$ established above gives
  \begin{equation*}
    \Big|\int_{r}^{R}\mathcal{R}_{\ve}(\rho)\,\rd\rho\Big|\le C_{0}(R-r)\le C_{0}R,
  \end{equation*}
  uniformly in $\ve$.
  Hence, after sending $\ve\to0$, from \eqref{eq:3.26} we obtain
\begin{align*}
  \int_{B_R(0)\setminus B_r(0)}\frac{1}{2}|\nabla d|^{2} \rd x
  &\ge \int_{B_R(0)\setminus B_r(0)}\left| \frac{\partial d}{\partial r}\right|^{2} \rd x\\
  &\quad + \int_{r}^{R}\frac{1}{\rho}\,\lim_{\ve\to0}\int_{B_\rho(0)}
      \frac{1}{\ve^{2}}F(|d^{\ve}|^2)\rd x\rd\rho - C_{0}R.
\end{align*}
  Sending $r\to0$, this further implies
  \begin{equation*}
    \int_{B_R(0)}\frac{1}{2}|\nabla d|^{2}\rd x\ge \int_{B_R(0)}\left| \frac{\partial
    d}{\partial r}\right|^{2}\rd x+ \int_{0}^{R}\frac{1}{\rho}\lim_{\ve\to0}\int_{B_\rho(0)}
    \frac{1}{\ve^{2}}F(|d^{\ve}|^2)\rd x \rd\rho - C_{0}R.
  \end{equation*}
  From this, we must have
  \begin{equation}
    \label{eq:3.28}\frac{1}{\ve^{2}}F(|d^{\ve}|^2) \to 0 \quad \text{in }L^{1}(B_{r_{*}}
    ).
  \end{equation}
  To see this, note that the nonnegative measures $\frac{1}{\ve^{2}}F(|d^{\ve}|^{2})\,\dx$ have total mass at most $2\Lambda_{1}$ and hence, along a subsequence, converge weakly-$*$ to a nonnegative measure $\mu$ on $B_{r_{*}}$. Away from the origin, $\mu$ vanishes: by the convexity of $F$ (Property (F1)) and $F\ge0=F(1)$, we have $F(r)\le F'(r)(r-1)$ for all $r$, so that on compact subsets of $B_{r_{*}}\setminus\{(0,0)\}$, where $\frac{1}{2}\le\rho_{\ve}=|d^{\ve}|<2$,
  \begin{equation*}
    \frac{1}{\ve^{2}}F(\rho_{\ve}^{2})\le \frac{1}{\ve^{2}}F'(\rho_{\ve}^{2})(\rho_{\ve}^{2}-1)\lesssim \frac{1}{\ve^{2}}F'(\rho_{\ve}^{2})\,\rho_{\ve}(\rho_{\ve}-1)\to 0\quad\text{in } L^{1}_{\rm loc}\big(B_{r_{*}}\setminus\{(0,0)\}\big),
  \end{equation*}
  where the convergence of the right-hand side was established in Step 6 of the proof of Lemma \ref{lemma:compactness} (both nonnegative terms on the left-hand side of the identity there tend to $0$). Therefore $\mu=\kappa\delta_{(0,0)}$ for some $\kappa\ge0$, and \eqref{eq:3.28} amounts to showing that $\kappa=0$.
  Otherwise,
  \begin{equation*}
    \frac{1}{\ve^{2}}F(|d^{\ve}|^2)\, \dx\overset{\ast}\rightharpoonup \kappa \delta_{(0,0)}
  \end{equation*}
  for some $\kappa>0$, which would imply
  \begin{equation*}
    \int_{0}^{R}\frac{1}{\rho}\lim_{\ve\to0}\int_{B_\rho(0)}\frac{1}{\ve^{2}}F(|d^{\ve}|^2
    )\,\rd x \rd\rho = \int_{0}^{R}\frac{\kappa}{\rho}\, d\rho = \infty,
  \end{equation*}
  a contradiction.

  Next, by choosing $X(x)=(x_1,0)$ in \eqref{eqn:pohozaev}, for which ${\rm div}\,X=1$, $\nabla X:(\nabla d^{\ve}\odot\nabla d^{\ve})=|\partial_{x_1}d^{\ve}|^{2}$, and
  $e_{\ve}(d^{\ve})-|\partial_{x_1}d^{\ve}|^{2}=\frac{1}{2}\big(|\partial_{x_2}d^{\ve}|^{2}-|\partial_{x_1}d^{\ve}|^{2}\big)+\mathcal{F}_{\ve}(d^{\ve})$,
  we obtain
\begin{equation}
  \label{eq:3.29}
  \begin{split}
    &\frac12\int_{B_r(0)}\bigl(|\partial_{x_2}d^{\ve}|^{2}
      -|\partial_{x_1}d^{\ve}|^{2}\bigr)\dx
      +\int_{B_r(0)}\mathcal{F}_{\ve}(d^{\ve})\dx \\
    &= \int_{B_r(0)} x_1\partial_{x_1}d^{\ve}\cdot\tau^{\ve}\dx
      -\int_{\partial B_r(0)} x_1\partial_{x_1}d^{\ve}\cdot\partial_r d^{\ve}\,\rd\sigma(x)  +\int_{\partial B_r(0)}\frac{x_1^{2}}{r} e_{\ve}(d^{\ve})\,\rd\sigma(x).
  \end{split}
\end{equation}
  Observe that by Fubini's theorem, for a.e.\ $r>0$,
  \begin{equation*}
    \int_{\partial B_r(0)}x_1\partial_{x_1}d^{\ve}\cdot \partial_{r}d^{\ve}\rd\sigma(x)
    \to \int_{\partial B_r(0)}x_1\partial_{x_1}d\cdot\partial_{r}d\rd\sigma(x),
  \end{equation*}
  \begin{equation*}
    \int_{\partial B_r(0)}\frac{x_1^{2}}{r}e_{\ve}(d^{\ve})\rd\sigma(x) \to \frac{1}{2}\int_{\partial
    B_r(0)}\frac{x_1^{2}}{r}|\nabla d |^{2}\rd\sigma(x),
  \end{equation*}
  and by \eqref{eq:3.28},
  \begin{equation*}
    \int_{B_r(0)}\frac{1}{\ve^{2}}F(|d^{\ve}|^2)\dx \to 0 .
  \end{equation*}
  Furthermore,
  \begin{equation*}
    \left| \int_{B_r(0)}x_1\partial_{x_1}d^{\ve}\cdot\tau^{\ve}\dx\right| \le
    C r \|\tau^{\ve}\|_{L^2}\|\nabla d^{\ve}\|_{L^2}\le C_{1}r,
  \end{equation*}
with $C_{1}$ independent of $r$ and $\ve$, by \eqref{taubound} and the energy estimate. Moreover, since the limiting measure in \eqref{convalpha} assigns no mass to $\partial B_r(0)$ for any $r>0$ (its singular part being concentrated at the origin), \eqref{convalpha} yields
  \begin{equation*}
    \lim_{\ve\to0}\int_{B_r(0)}\big(|\partial_{x_1}d^{\ve}|^{2}-|\partial_{x_2}d^{\ve}|^{2}\big)\dx=\int_{B_r(0)}\big(|\partial_{x_1}d|^{2}-|\partial_{x_2}d|^{2}\big)\dx+\alpha.
  \end{equation*}
  Hence, sending $\varepsilon\to0$ in \eqref{eq:3.29}, we obtain, for a.e. $r\in(0,r_{*})$,
  \begin{equation*}
    \Big|\int_{B_r(0)}\big( |\partial_{x_1}d|^{2}- |\partial_{x_{{2}}}d|^{2}\big) \dx+ \alpha \Big|\le C_{2}\,r+3r\int_{\partial B_r(0)}|\nabla d|^{2}\,\rd\sigma(x).
  \end{equation*}
  Here $C_{2}:=2C_{1}$, and the two boundary integrals in \eqref{eq:3.29} were estimated using the elementary bounds $|x_{1}|\le r$ and $\frac{x_{1}^{2}}{r}\le r$ on $\partial B_{r}(0)$.
  This implies $\alpha=0$ after sending $r\to0$ along a suitable sequence: the integral over $B_r(0)$ vanishes as $r\to0$ by the dominated convergence theorem, since $|\nabla d|^{2}\in L^{1}(\T^{2})$; and since
  \begin{equation*}
    \int_{0}^{r_{*}}\Big(\int_{\partial B_r(0)}|\nabla d|^{2}\,\rd\sigma(x)\Big)\rd r=\int_{B_{r_{*}}(0)}|\nabla d|^{2}\rd x<\infty,
  \end{equation*}
  we must have $\liminf_{r\to0^+}\,r\int_{\partial B_r(0)}|\nabla d|^{2}\,\rd\sigma(x)=0$, so there exists a sequence $r_{j}\to0^{+}$ along which the right-hand side tends to $0$. As $\alpha$ is independent of $r$, we conclude $\alpha=0$. Similarly, choosing $X(x)=(x_2,0)$, for which ${\rm div}\,X=0$ and $\nabla X:(\nabla d^{\ve}\odot\nabla d^{\ve})=\partial_{x_1}d^{\ve}\cdot\partial_{x_2}d^{\ve}$, in \eqref{eqn:pohozaev} and passing to the limit as above, now using \eqref{convbeta}, we obtain
  \begin{equation*}
    \Big|\int_{B_r(0)}(\partial_{x_1}d\cdot\partial_{x_2}d)\rd x + \beta \Big|\le C_{2}\,r+3r\int_{\partial B_r(0)}|\nabla d|^{2}\,\rd\sigma(x),
  \end{equation*}
  which implies $\beta=0$ after sending $r\to0$ along a suitable sequence, as before. This proves \eqref{convalpha},
  \eqref{convbeta}, and \eqref{vanish}. Hence, as $t\in A$ was arbitrary and the argument applies near each point of the finite set $\Sigma_{t}$, \eqref{eqn:EricksenConv} holds for every $t\in A$.

  \section{Existence of weak solutions}
  In this section, we complete the proof of existence for the system \eqref{eqn:MainPDE} 
  by passing to the limit as $\epsilon \to 0$. We begin by establishing uniform bounds 
  on the time derivatives to ensure compactness. We claim that there exists some 
  $p \in (1,2)$ such that
  \begin{equation*}
    \sup_{\ve>0}\left[ \|v_{t}^{\ve}\|_{L_t^2 W_{x,\div}^{-2,p}}+ \|d_{t}^{\ve}\|
    _{L_t^{4/3} L_x^{4/3}}\right] < \infty .
  \end{equation*}
  Here $W_{x,\div}^{-2,p}$ denotes the dual of
  \begin{equation*}
   W_{\div}^{2,p'}(\T^2) := \{ g \in W^{2,p'}(\T^2,\mathbb{R}^{2}) : \nabla
    \cdot g = 0 \}, \quad p' = \frac{p}{p-1}.
  \end{equation*}
  To see this, first observe that for $1<p<2$,
  \begin{equation*}
    - v^{\ve}\cdot \nabla v^{\ve}- \nabla \cdot (\nabla d^{\ve}\odot
    \nabla d^{\ve})+\nabla\cdot\sigma^\ve+\nabla\cdot((\nabla\Phi^\ve\otimes\nabla\Phi^\ve)\varepsilon(d^\ve)) \in L_{t}^{2}W_{x}^{-2,p},
  \end{equation*}
  and
  \begin{align*}
   & \big\|   - v^{\ve}\cdot \nabla v^{\ve}- \nabla \cdot (\nabla d^{\ve}\odot
    \nabla d^{\ve})+\nabla\cdot\sigma^\ve+\nabla\cdot((\nabla\Phi^\ve\otimes\nabla\Phi^\ve)\varepsilon(d^\ve))\big\|_{L_t^2 W_x^{-2,p}} \\
    & \le C \Big( \|v^{\ve}\otimes v^{\ve}\|_{L_t^2 L_x^1}+ \|\nabla d^{\ve}\odot \nabla d^{\ve}\|_{L_t^2 L_x^1}+\|\sigma^{\ve}\|_{L^2_tL^2_x}+\|(\nabla\Phi^{\ve}\otimes\nabla\Phi^{\ve})\varepsilon(d^{\ve})\|_{L^\infty_tL^2_x}\Big) \\
   & \le C \Big( (1+\|v^{\ve}\|_{L_t^\infty L_x^2}) \|v^{\ve}\|_{L_t^\infty L_x^2}+ \|\nabla d^{\ve}\|_{L_t^\infty L_x^2}^{2}\Big) \le C .
  \end{align*}
  The duality mechanism behind the estimate is the following: if $\nabla g\in L^{\infty}(\T^2)$, then for any tensor field $M\in L^{1}(\T^2)$,
  \begin{equation*}
    |\langle \nabla\cdot M, g\rangle|=\Big|\int_{\T^2}M:\nabla g\,\dx\Big|\le \|M\|_{L^1_x}\,\|\nabla g\|_{L^\infty_x},
  \end{equation*}
  and $\|\nabla g\|_{L^\infty_x}\lesssim\|g\|_{W^{2,p'}_x}$ by the two-dimensional Sobolev embedding $W^{1,p'}(\T^2)\hookrightarrow L^{\infty}(\T^2)$ (applied to $\nabla g$), which holds if and only if $p'=\frac{p}{p-1}>2$, that is, $p<2$; thus $\|\nabla\cdot M\|_{W^{-2,p}_x}\lesssim \|M\|_{L^1_x}$. In particular, for every $p\in(1,2)$,  the following estimates hold
  \begin{align*}
    \|\sigma^{\ve}\|_{L^2_tL^2_x}&\lesssim \big(1+\|d^{\ve}\|_{L^\infty(Q_T)}^{4}\big)\big(\|\od^{\ve}\|_{L^2_tL^2_x}+\|D(v^{\ve})\|_{L^2_tL^2_x}\big)\le C,\\
    \|(\nabla\Phi^{\ve}\otimes\nabla\Phi^{\ve})\varepsilon(d^{\ve})\|_{L^\infty_tL^2_x}&\lesssim \big(1+\|d^{\ve}\|_{L^\infty(Q_T)}^{2}\big)\|\nabla\Phi^{\ve}\|_{L^\infty_tL^4_x}^{2}\le C,
  \end{align*}
  by the energy estimate, the uniform bound $|d^{\ve}|<2$, and \eqref{philp}.

  Then we can estimate
  \begin{align*}
   & \|v_{t}^{\ve}(\cdot,t)\|_{W_{x,\div}^{-2,p}}\\
    & = \sup \Big\{ \langle v_{t}^{\ve}(\cdot,t), g \rangle : \|g\|_{W_{\div}^{2,p'}}\le 1 \Big\}\\
  & = \sup \Big\{ \big\langle - v^{\ve}\cdot \nabla v^{\ve} - \nabla \cdot (\nabla d^{\ve}\odot \nabla d^{\ve})  + \nabla\cdot\sigma^\ve+\nabla\cdot((\nabla\Phi^\ve\otimes\nabla\Phi^\ve)\varepsilon(d^\ve)), g \big\rangle \\
    & \qquad \quad : g \in W_{\div}^{2,p'}(\T^2), \|g\|_{W^{2,p'}}\le 1 \Big\} \\
  & \le \big\| - v^{\ve}\cdot \nabla v^{\ve} - \nabla \cdot (\nabla d^{\ve}\odot \nabla d^{\ve}) \nabla\cdot\sigma^\ve+\nabla\cdot((\nabla\Phi^\ve\otimes\nabla\Phi^\ve)\varepsilon(d^\ve))\big\|_{W_x^{-2,p}}\\
  &\le C .
  \end{align*}
  To estimate $d_{t}^{\ve}$, observe that
  \begin{equation*}
    d_{t}^{\ve}=\od^\ve - v^{\ve}\cdot \nabla
    d^{\ve}+\Omega(v^\ve)d^\ve\in L_{t}^{2}L_{x}^{2}+ L_{t}^{4}L_{x}^{4}\times L_{t}^{2}L_{x}^{2}\subset
    L_{t}^{4/3}L_{x}^{4/3},
  \end{equation*}
  and
  \begin{equation*}
    \begin{split}
      \|d_{t}^{\ve}\|_{L_t^{4/3} L_x^{4/3}}
      &\le \|\od^\ve\|_{L_t^2 L_x^2} + \|v^{\ve}\|_{L_t^4 L_x^4}\|\nabla d^{\ve}\|_{L_t^2 L_x^2}  +\|\nabla v^\ve\|_{L_t^2L_x^2}\le C ,
    \end{split}
  \end{equation*}
  where we have used the energy inequality, the bound $\|\Omega(v^{\ve})d^{\ve}\|_{L^2_tL^2_x}\le \|d^{\ve}\|_{L^\infty(Q_T)}\|\nabla v^{\ve}\|_{L^2_tL^2_x}\le C$, and the embedding
  $L_{t}^{\infty}L_{x}^{2}\cap L_{t}^{2}H_{x}^{1}\subset L_{t}^{4}L_{x}^{4}$, which follows from the Ladyzhenskaya inequality $\|f\|_{L^4(\T^2)}^{2}\lesssim \|f\|_{L^2}\|f\|_{H^1}$. Hence,
  by Aubin--Lions's lemma, applied with
  \begin{equation*}
    \begin{aligned}
      v^{\ve}&:\quad H^{1}_{x}\Subset L^{2}_{x}\hookrightarrow W^{-2,p}_{x,\div}, \qquad
      &&\sup_{\ve}\Big(\|v^{\ve}\|_{L^{2}_{t}H^{1}_{x}}+\|v^{\ve}_{t}\|_{L^{2}_{t}W^{-2,p}_{x,\div}}\Big)<\infty,\\
      d^{\ve}&:\quad H^{1}_{x}\Subset L^{2}_{x}\hookrightarrow L^{4/3}_{x}, \qquad
      &&\sup_{\ve}\Big(\|d^{\ve}\|_{L^{\infty}_{t}H^{1}_{x}}+\|d^{\ve}_{t}\|_{L^{4/3}_{t}L^{4/3}_{x}}\Big)<\infty,
    \end{aligned}
  \end{equation*}
  there exist
  \begin{equation*}
    v \in L_{t}^{\infty}L_{x}^{2}\cap L_{t}^{2}H_{x}^{1}(Q_{T},\mathbb{R}^{2}) ,
    \qquad d \in L_{t}^{\infty}H_{x}^{1}(Q_{T},\R^3),
  \end{equation*}
  such that, after taking a subsequence,
  \begin{equation*}
    (v^{\ve}, d^{\ve}) \to (v,d) \quad \text{in }L^{2}(Q_{T}), \qquad (\nabla v^{\ve}
    , \nabla d^{\ve}) \rightharpoonup (\nabla v, \nabla d) \quad \text{in }L^{2}(Q_{T}).
  \end{equation*}
  This also implies
  \begin{equation*}
    d_{t}^{\ve}+ v^{\ve}\cdot \nabla d^{\ve}\rightharpoonup d_{t}+ v \cdot \nabla d \quad \text{in
    }L^{2}(Q_{T}).
  \end{equation*}
  Furthermore, $(v^{\ve}, d^{\ve})$ is bounded in $C_{w}([0,T], W^{-2,p}(\T^2)
  \times L_{x}^{4/3}(\T^2))$. This is because the uniform bounds on the time derivatives give, for all $0\le s\le t\le T$,
  \begin{equation*}
    \|v^{\ve}(t)-v^{\ve}(s)\|_{W^{-2,p}_{x,\div}}\le\int_{s}^{t}\|v^{\ve}_{t}\|_{W^{-2,p}_{x,\div}}\,\rd\tau\le C|t-s|^{\frac{1}{2}},\qquad
    \|d^{\ve}(t)-d^{\ve}(s)\|_{L^{4/3}_x}\le C|t-s|^{\frac{1}{4}},
  \end{equation*}
  by H\"older's inequality in time, so the two families are in fact equicontinuous with values in $W^{-2,p}_{x,\div}\times L^{4/3}_{x}$. Together with the boundedness
  \begin{equation*}
    (v^{\ve}, d^{\ve}) \in L_{t}^{\infty}L_{x}^{2}(Q_{T}) \times L_{t}^{\infty}H_{x}
    ^{1}(Q_{T}),
  \end{equation*}
  this implies that $(v^{\ve}, d^{\ve})$ is bounded in
  $C_{w}([0,T], L^{2}(\T^2) \times H^{1}(\T^2))$. Thus, after extracting a
  subsequence,
  \begin{equation}\label{eqn:vdconvergence}
    (v^{\ve}(t), d^{\ve}(t)) \to (v(t), d(t)) \quad \text{in }L^{2}(\T^2) \times
    H^{1}(\T^2), \quad \forall\, t \in [0,T].
  \end{equation}
  By lower semicontinuity, for all $t \in (0,T)$,
  \begin{equation*}
    \int_{\T^2\times\{t\}}\big(|v|^{2}+ |\nabla d|^{2}\big)\rd x + \int_0^t\int_{\T^2}\big(|
    \nabla v|^{2}+ |\od(t)|^{2}+|D(v)d|^2\big) \rd x\rd s\le E_{0}.
  \end{equation*}
  Recall from Section \ref{3.2} the set $A\subset[0,T]$ with $|A|=T$ of good time slices, constructed there via Fatou's lemma and the energy inequality, and, for $t\in A$, the finite concentration set $\Sigma_{t}$ with $d^{\ve}(t)\to d(t)$ in $H^{1}_{\rm loc}(\T^{2}\setminus\Sigma_{t})$ after passing to a subsequence.
Next we would like to verify that $d$ is a weak solution by utilizing the geometric structure.
By the convexity of $F$ and the lower semicontinuity, we can show
\begin{equation*}
  \int_{\T^2}F(|d(t)|^2)\dx\le \liminf_{\ve\to 0}\int_{\T^2}F(|d^\ve(t)|^2)\dx\le \lim_{\ve\to 0}C\ve^2=0
\end{equation*}
which leads to the conclusion that $|d(t)|=1$ a.e. in $\T^2$.
Here the last inequality holds with $C=2E_{0}$, since the energy estimate gives $\frac{1}{2\ve^{2}}\int_{\T^2}F(|d^{\ve}(t)|^{2})=\int_{\T^2}\mathcal{F}_{\ve}(d^{\ve}(t))\le E_{0}$; the first inequality follows from Fatou's lemma along a subsequence with $d^{\ve}(t)\to d(t)$ a.e. Since $F\ge0$ vanishes only at $1$, we get $F(|d(t)|^{2})=0$ a.e., that is, $|d(t)|=1$ a.e.
For any $\psi\in C^\infty(\T^2\times \R_+)$ with compact support in $\T^2\times (0,T)$, we can take the cross product
of $d^\ve\psi$ with \eqref{eqn:GinzburgLandauApp}$_{5}$ and integrate over $\T^2\times (0,T)$, then we have
\begin{equation*}
  \begin{split}
    &\int_0^T\int_{\T^2} \gamma_1\od^\ve\wedge d^\ve \psi +\gamma_2 D(v^\ve)d^\ve\wedge d^\ve\psi\rd x\rd t \\
    &=\int_0^T\int_{\T^2} d^\ve\wedge\nabla d^\ve\cdot \nabla \psi +\ve_a(\nabla \Phi^\ve\otimes\nabla \Phi^\ve)d^\ve\wedge d^\ve\psi\rd x\rd t
  \end{split}
\end{equation*}
Here the singular potential term drops out, as $f_{\ve}(d^{\ve})=\frac{1}{\ve^{2}}F'(|d^{\ve}|^{2})\,d^{\ve}$ is parallel to $d^{\ve}$, and the Laplacian term is integrated by parts using $\sum_{i=1}^{2}\partial_{x_i}d^{\ve}\wedge\partial_{x_i}d^{\ve}=0$.

For the term quadratic in $\nabla\Phi^{\ve}$, we use the strong convergence
\begin{equation}\label{phistrong}
  \nabla\Phi^\ve\to\nabla\Phi \quad\text{in } L^p(Q_T)\ \text{for some } p>2,
\end{equation}
which follows from \eqref{phiin}, \eqref{lil4}, and elliptic regularity; see \eqref{phiconv} at the end of this section.

By passing to the limit as $\ve\to 0^+$, noting that each of the remaining terms is the product of a factor converging weakly in $L^{2}(Q_{T})$ ($\od^{\ve}$, $D(v^{\ve})$, $\nabla d^{\ve}$, respectively) with factors converging strongly in $L^{q}(Q_{T})$ for every $q<\infty$ (namely $d^{\ve}$ and its products, since $d^{\ve}\to d$ in $L^{2}(Q_{T})$ and $|d^{\ve}|<2$), while the term quadratic in $\nabla\Phi^{\ve}$ converges in $L^{1}(Q_{T})$ by \eqref{phistrong}, we have
\begin{equation*}
  \begin{split}
    &\int_0^T\int_{\T^2} \gamma_1\od\wedge d \psi +\gamma_2 D(v)d\wedge d\psi\rd x\rd t \\
    &=\int_0^T\int_{\T^2}\nabla d\wedge d\cdot \nabla \psi +\ve_a(\nabla \Phi\otimes\nabla \Phi)d\wedge d\psi\rd x\rd t
  \end{split}
\end{equation*}
This yields 
\begin{equation*}
  [\gamma_1\od+\gamma_2D(v)d-\Delta d-\ve_a(\nabla \Phi\otimes\nabla \Phi)d]\wedge d=0.
\end{equation*}
Here the identity is understood in the sense of distributions, with $\Delta d\wedge d:=\nabla\cdot(\nabla d\wedge d)$, which is meaningful since $\nabla d\wedge d\in L^{\infty}_{t}L^{2}_{x}(Q_{T})$. Since $|d|=1$ a.e., for a.e. $(x,t)$ a vector $w\in\R^{3}$ satisfies $w\wedge d=0$ if and only if $w=(w\cdot d)\,d$.
Hence there is a multiplier $\lambda:\T^2\times(0,T)\to \R$ such that
\begin{equation*}
  \gamma_1\od+\gamma_2D(v)d-\Delta d-\ve_a(\nabla \Phi\otimes\nabla \Phi)d=\lambda d.
\end{equation*}
By the fact that $|d|=1$, we can multiply the equation by $d\psi$ and integrate over $\T^2$ to yield
\begin{equation*}
  \begin{split}
    &\int_{\T^2}\big(\gamma_2(d\cdot D(v)d)+|\nabla d|^2-\ve_a(\nabla \Phi\otimes\nabla \Phi)d\cdot d\big)\psi\rd x=\int_{\T^2}\lambda \psi\rd x.
  \end{split}
\end{equation*}
Then we have $\lambda=\gamma_2(d\cdot D(v)d)-|\nabla d|^2+\ve_a(\nabla \Phi\otimes\nabla \Phi)d\cdot d$.
In the computation above we used the identities, valid whenever $|d|=1$ a.e.: $\od\cdot d=\frac{1}{2}(\partial_{t}+v\cdot\nabla)|d|^{2}-\Omega(v)d\cdot d=0$, since $\Omega(v)$ is antisymmetric, and $\Delta d\cdot d=\frac{1}{2}\Delta|d|^{2}-|\nabla d|^{2}=-|\nabla d|^{2}$, both in the sense of distributions.

Next we check $v$ is a weak solution to \eqref{eqn:MainPDE}$_{3}$.
For any $z\in C^\infty((0, T)\times\T^2;\R^2)$ with $\nabla\cdot z=0$, we have
\begin{equation*}
  \begin{split}
    \int_0^T\int_{\T^2}&v^\ve\cdot\pa_t z+(v^\ve\otimes v^\ve):\nabla z\rd x\rd t \\
    &=\int_0^T\int_{\T^2}[-\nabla d^\ve\odot \nabla d^\ve+\sigma^\ve+(\nabla \Phi^\ve\otimes\nabla \Phi^\ve)\varepsilon(d^\ve)]:\nabla z\rd x\rd t
  \end{split}
\end{equation*}
For a.e. $t\in A$: $v^{\ve}(t)\otimes v^{\ve}(t)\to v(t)\otimes v(t)$ in $L^{1}(\T^{2})$; $\sigma^{\ve}(t)\rightharpoonup\sigma(t)$ in $L^{4/3}(\T^{2})$ (weakly-$L^{2}$ convergent factors $\od^{\ve},D(v^{\ve})$ times strongly convergent factors of $d^{\ve}$); $\big((\nabla\Phi^{\ve}\otimes\nabla\Phi^{\ve})\varepsilon(d^{\ve})\big)(t)\to\big((\nabla\Phi\otimes\nabla\Phi)\varepsilon(d)\big)(t)$ in $L^{2}(\T^{2})$ by \eqref{phiconv}; and the Ericksen stress converges by \eqref{eqn:EricksenConv}, where subtracting the null term $-\frac{1}{2}|\nabla d^{\ve}|^{2}{\rm Id}$ costs nothing since ${\rm Id}:\nabla z=\nabla\cdot z=0$.

Notice that for the good time slices $A\subset (0,T)$ with $|A|=T$, we have
$\nabla d^\ve(t)\odot\nabla d^\ve(t)-\frac12 |\nabla d^\ve(t)|^2\rightharpoonup \nabla d(t)\odot \nabla d(t)-\frac12|\nabla d(t)|^2$ for $t\in A$, 
hence,  passing the limit as $\ve\to 0^+$ we have
\begin{equation*}
  \begin{split}
    \int_{\T^2\times\{t\}}&v\cdot\pa_t z+(v\otimes v):\nabla z\rd x \\
    &=\int_{\T^2\times\{t\}}[-\nabla d\odot \nabla d+\sigma+(\nabla \Phi\otimes\nabla \Phi)\varepsilon(d)]:\nabla z\rd x.
  \end{split}
\end{equation*}
Integrating it over $A$ and using the fact that $|A|=T$, we have shown that $v$ satisfies the weak form of \eqref{eqn:MainPDE}$_3$.

Lastly, we note that due to \eqref{alllp} and \eqref{phiin}, we have that $\ck$ and $\nabla\Phi^\ve$ are bounded uniformly in $L^\infty(Q_T)$. Furthermore, from \eqref{nac}, we know that $\ck$ is bounded uniformly in $L^\infty_tH^1_x.$ Then, from the Nernst-Planck equations $\eqref{eqn:GinzburgLandauApp1}_1$, we have
\begin{align*}
    \|\pa_t \ck\|_{L^\frac{4}{3}_tH^{-1}_x}&=\|-v^{\epsilon}\cdot \nabla \ck                +\cD_{k}\Delta\ck+\nabla\cdot(\ck \cD_{k}\nabla \Phi_k) \|_{L^\frac{4}{3}_tH^{-1}_x}\\
    &\lesssim \|v^\ve\cdot\nabla \ck\|_{L^\frac{4}{3}_tL^\frac{4}{3}_x}+\|\ck\|_{L^\frac{4}{3}_tH^1_x}+\|\ck \nabla\Phi_k\|_{L^\frac{4}{3}_tL^2_x}\\
    &\lesssim\|v^\ve\|_{L^4_tL_x^4}\|\nabla \ck\|_{L^2_tL^2_x}+ \|\ck\|_{L^2_tH^1_x}+\|\ck\|_{L^\infty_tL^\infty_x}\|\nabla\Phi^\ve\|_{L^\infty_t L^\infty_x}.
\end{align*}
Then using again the fact that $v^\ve$ is bounded uniformly in $L^\infty_tL^2_x\cap L^2_tH^1_x\subset L^4_tL^4_x$, we conclude that $\pa_t \ck$ is bounded uniformly in $L^\frac{4}{3}_t H^{-1}_x$. So by Aubin-Lions's lemma (or rather its refinement, due to Simon \cite{simon}), there exists
$$c_k \in L^\infty_tL^\infty_x\cap L^2_t H^1_x$$
such that, after passing to a subsequence,
\begin{align}\label{lil4}
\ck \to c_k \quad \text{in } L^\infty_t L^4_x.
\end{align}
Now with $c_k$ and $d$ defined, $\Phi$ is defined via $\eqref{eqn:MainPDE}_2$. Also, using elliptic regularity for $\Phi^\ve$ together with \eqref{lil4} and using the fact that $\nabla \ck$ is bounded, in particular, in $L^2(Q_T)$, we have (after passing to another subsequence if necessary)
\begin{align}\label{phiconv}
\nabla \Phi^\ve \to  \nabla\Phi \quad \text{in }L^{\infty}(Q_{T}), \qquad \nabla \ck\rightharpoonup \nabla c_k \quad \text{in }L^{2}(Q_{T}).
\end{align}
Then, since we know from above that $(v^\ve,d^\ve)\to (v,d)$ in $L^2(Q_T)$, we have 
\begin{align*}
\ck v^\ve &\to  c_k v \quad \text{in }L^{1}(Q_{T}),\\
\varepsilon(d^\ve)\nabla\Phi^\ve &\to \varepsilon(d)\nabla \Phi \quad \text{in }L^{1}(Q_{T}),\\
\ck\mathcal{D}_k\nabla\Phi^\ve &\to c_k\mathcal{D}_k \nabla\Phi \quad \text{in } L^2(Q_T).
\end{align*}
Now, taking $\phi_k,\psi\in C^\infty_c((0,T)\times \mathbb{T}^2)$ and testing $\eqref{eqn:GinzburgLandauApp1}_1$ and $\eqref{eqn:GinzburgLandauApp1}_2$ against these test functions, respectively, we obtain
\begin{equation}
\begin{aligned}\label{ints}
\int_0^T\int_{\mathbb{T}^2} \ck \pa_t\phi_k+\ck v^\ve\cdot\nabla\phi_k\rd x\rd t &= \int_0^T\int_{\mathbb{T}^2} \ck\mathcal{D}_k\nabla\mu_k^\ve\cdot\nabla\phi_k \rd x\rd t\\
\int_0^T\int_{\mathbb{T}^2}(\varepsilon(d^\ve)\nabla\Phi^\ve)\cdot\nabla\psi\rd x\rd t &= -\int_0^T\int_{\mathbb{T}^2}\sum_{k=1}^N z_kc_k\psi \rd x\rd t.
\end{aligned}
\end{equation}
Observing that $\nabla\mu_k^\ve=\frac{\nabla \ck}{\ck}+z_k\nabla\Phi^\ve$ and $\nabla\mu_k=\frac{\nabla c_k}{c_k}+z_k\nabla\Phi$, and using the convergence results stated above, we conclude, by passing to the limit $\ve\to 0^+$ in \eqref{ints}, that $c_k, \Phi, v, d$ satisfy $\eqref{eqn:MainPDE}_1$ and $\eqref{eqn:MainPDE}_2$ in the weak sense.

It remains to verify that the limit $(c_{1},\dots,c_{N},\Phi,v,d)$ meets the requirements of Definition \ref{def:weak}. The integral identities have been established above. For the regularity: $v\in L^{\infty}_{t}L^{2}_{{\rm div},x}\cap L^{2}_{t}H^{1}_{{\rm div},x}$ and $d\in L^{\infty}_{t}H^{1}_{x}$ with $|d|=1$ a.e.\ were obtained at the beginning of this section; $c_{k}\in L^{\infty}_{t}L^{\infty}_{x}\cap L^{2}_{t}H^{1}_{x}$ follows from \eqref{alllp}, \eqref{ch1}, and \eqref{lil4} by lower semicontinuity; and $\Phi\in L^{\infty}_{t}H^{1}_{x}\cap L^{\infty}_{t}L^{\infty}_{x}$ follows from \eqref{phiin} and elliptic regularity (normalizing $\int_{\T^2}\Phi\,\dx=0$). The nonnegativity $c_{k}\ge0$ is inherited from the positivity of $\ck$ (Section 2) via \eqref{lil4}. Finally, the initial conditions are attained: $(v^{\ve},d^{\ve})$ is bounded in $C_{w}([0,T],L^{2}\times H^{1})$ with common initial data, so $v(0)=v_{0}$ and $d(0)=d_{0}$, while $c_{k}(0)=c_{k,0}$ follows from the uniform bound on $\pa_{t}\ck$ in $L^{4/3}_{t}H^{-1}_{x}$, which renders $\ck$ equicontinuous with values in $H^{-1}_{x}$. This completes the proof of Theorem \ref{thm:MainTheorem}. \qed

\providecommand{\bysame}{\leavevmode\hbox to3em{\hrulefill}\thinspace}
\providecommand{\href}[2]{#2}

\end{document}